\date{\today}
\newcommand{\indic}[1]{\mathbf{1}_{#1}}
\newcommand{\indica}[1]{\mathbf{1}_{\{#1\}}}
\newtheorem{theorem}{Theorem}
\newtheorem{lemma}[theorem]{Lemma}
\newtheorem{coro}[theorem]{Corollary}
\newtheorem{proposition}[theorem]{Proposition}
\theoremstyle{definition}
\newtheorem{defn}[theorem]{Definition}
\newtheorem{remark}[theorem]{Remark}
\newcommand{\eps}{\varepsilon}
\newcommand{\R}{\mathbb{R}}
\newcommand{\bck}{\!\!\!}
\newcommand{\bx}{{\bf x}}
\newcommand{\FF}{\mathcal{F}}
\newcommand{\La}{\Lambda}
\newcommand{\X}{\Xi}
\newcommand{\la}{\lambda}
\newcommand{\iy}{\infty}
\newcommand{\sfrac}[2]{{\textstyle\frac{#1}{#2}}}
\newcommand{\N} {\mathbb{N}}
\newcommand{\sumd}{\sum_{i=1}^\iy x_i^2}
\newcommand{\s}{\bar s}
\begin{document}

\author{Vlada Limic\thanks{This article reports on the research done during a visit
to the Institut Mittag-Leffler (Djursholm, Sweden).}\thanks{Research supported in part by the ANR MAEV grant.}\\
CNRS UMR 6632}

\title{On the speed of coming down from infinity for $\X$-coalescent processes}

\maketitle

\begin{abstract}
\noindent
The $\X$-coalescent processes were 
initially studied by  M\"{o}hle and Sagitov (2001), and
introduced by Schweinsberg (2000) in their full generality.
They arise in the mathematical population genetics as
the complete class of scaling limits for genealogies of Cannings' models. 
The $\X$-coalescents generalize $\Lambda$-coalescents, where now simultaneous 
 multiple collisions of blocks are possible.
The standard version starts with infinitely many blocks at time $0$, and it is said
to come down from infinity if its number of blocks becomes immediately finite, almost surely.
This work builds on the technique introduced recently by Berestycki, Berestycki and Limic (2009),
and exhibits a deterministic ``speed'' function -- an almost sure 
small time asymptotic to the number of blocks process,
for a large class of $\X$-coalescents that come down from infinity.
\end{abstract}

\noindent {\em AMS 2000 Subject Classification.}
60J25, 60F99, 92D25

\noindent {\em Key words and phrases.}
Exchangeable coalescents, small-time asymptotics, coming down from infinity,
martingale technique

\noindent {\em Running Head:}
On the speed of CDI for $\X$-coalescents

\clearpage

\section{Introduction}
Kingman's coalescent \cite{king82,king82b} is one of the 
central models of mathematical population genetics.
From the theoretical perspective, its importance is 
linked to the duality with 
the Fisher-Wright diffusion (and more generally with the Fleming-Viot process).
Therefore the Kingman coalescent emerges in the scaling limit of genealogies
of all evolutionary models that are asymptotically linked to Fisher-Wright
diffusions.
From the practical perspective, its elementary nature allows for exact
computations and fast simulation, making it amenable to statistical analysis.

Assume that the original sample has $m$ individuals, labeled $\{1,2,\ldots,m\}$.
One can identify each of the active ancestral
lineages, at any particular time, with a unique equivalence
class of $\{1,2,\ldots,m\}$ that consists of all the individuals 
that descend from this lineage.
In this way, the coalescent event of two ancestral lineages can be perceived as 
the merging event of two equivalence classes.
Ignoring the partition structure information, 
one can now view the coalescent as a {\em block} (rather than equivalence class) merging
process.

Kingman's coalescent corresponds to the dynamics where
each pair of blocks coalesces at rate 1.
Hence, if there are $n$ blocks present in the current configuration, 
the total number of blocks decreases by $1$ at rate ${n\choose 2}$.
Using this observation and elementary properties of
exponential random variables, one can quickly construct the {\em standard version}
of the process,
which ``starts'' from a configuration containing an infinite number of individuals (particles, or blocks)
at time $0$, 
and has the property that its configuration consists of finitely many blocks 
at any positive time.

The fact that in the Kingman coalescent dynamics only pairs of blocks can merge at any given time
makes it less suitable to 
model evolutions of marine populations or
viral populations under strong selection.
In fact, it is believed (and argued to be observed in experiments, see e.g.~\cite{marine}) that 
in such settings the reproduction mechanism allows for a proportion of 
the population to have the same parent (i.e., first generation ancestor).
This translates to having multiple
collisions of
the ancestral lineages in the corresponding coalescent mechanism.

A family of mathematical models with the above property was independently introduced and
studied by Pitman \cite{pit99} and Sagitov \cite{sag99}
under the name {\em $\Lambda$-coalescents} or
{\em coalescents with multiple collisions}.
Almost immediately emerged 
an even more general class of models, named {\em $\X$-coalescents} or 
{\em coalescents with simultaneous multiple collisions} or {\em exchangeable coalescents}.
The Greek letter $\X$ in the name is a reference to 
the {\em driving measure $\X$} (see Sections \ref{S:Xicoal} and \ref{S:Xicoal further} for details).
The $\X$-coalescent processes were 
initially studied by  M\"{o}hle and Sagitov \cite{moehle sagitov}, and
introduced by Schweinsberg \cite{schweinsberg_xi} in their full generality.
In particular, it is shown in \cite{moehle sagitov} that any limit
of genealogies arising from a population genetics model with exchangeable reproduction
mechanism must be a $\X$-coalescent.

The current paper uses the setting and several of the results from \cite{schweinsberg_xi} 
that will be recalled soon.
Formally, under the $\X$-coalescent dynamics,
several families of blocks (with two or more blocks in each family) 
may (and typically do) coalesce simultaneously.
The $\X$-coalescents will be rigorously defined in the next section.

More recently, 
Bertoin and Le Gall \cite{blg1} established a one-to-one correspondence between a class of 
processes called {\em stochastic flows of 
bridges} and $\X$-coalescents, and then constructed in \cite{blg3}
the {\em generalized Fleming-Viot (or $\La$-Fleming-Viot) processes} 
that have $\La$-coalescent processes as duals;
Birkner et al.~\cite{biretal}~recently extended this further by constructing
for each driving measure $\X$ the
{\em $\X$-Fleming-Viot processes}, dual to the corresponding
$\X$-coalescent process;
Durrett and Schweinsberg \cite{dursch} 
showed that
genealogies during {\em selective sweeps}
are well-approximated 
by certain $\X$-coalescents;
and Birkner and Blath \cite{birbla} initiated a statistical study of coalescents with
multiple collisions.

Generalizations of
$\Lambda$-coalescents to spatial (not a mean-field) setting are studied by
Limic and Sturm \cite{ls}, and more recently by Angel et al.~\cite{abl} and 
Barton et al.~\cite{barethveb}.
The reader can find detailed information about these and related
research areas in recent texts by Berestycki \cite{berest brasil} and 
Bertoin \cite{bertoin book}.

Let $N^\X\equiv N:=(N(t),\,t \geq 0)$ be the number of blocks process
corresponding to a particular standard (meaning $\lim_{t\to 0+}N(t)=\iy$) $\X$-coalescent process.
Moreover, suppose that this $\X$-coalescent {\em comes down from infinity}, or equivalently, assume that 
$P(N(t)<\infty,\,\forall t>0)=1$ (see the end of
Section \ref{S:Xicoal further} for a formal discussion).
From the practical perspective,
it seems important to understand the nature of the divergence of $N(t)$
as $t$ decreases to $0$ (see \cite{bbl2} for further discussion and applications).

The main goal of this work is
to exhibit a function $v_\X\equiv v:(0,\iy)\to (0,\iy)$ such that
\begin{equation}
\label{Ealmost surely}
\lim_{t\to 0+}
\frac{N(t)}{v(t)}= 1, \mbox{ almost surely.}
\end{equation}
We call any such $v$ the {\em speed} 
of coming down from infinity (speed of CDI)
for the corresponding $\X$-coalescent.
The exact form of the function $v$ is implicit and somewhat
technical, see Theorems \ref{Tsmalltime reg early} or \ref{Tsmalltime reg} for a precise statement.
Moreover, the speed is obtained under (relatively weak) additional ``regularity'' condition.

The coming down from infinity property was already studied by Schweinsberg \cite{schweinsberg_xi}
in detail.
The speed of coming down from infinity for general $\X$-coalescents has not been previously studied.
In Berestycki et al.~\cite{bbl1} the speed of CDI of
any $\La$-coalescent that comes down from infinity
was found using a martingale-based technique. 
A modification of this technique will be used presently to determine the above $v$,
and the steps in the argument that carry over directly to the current setting will
only be sketched.

In the $\La$-coalescent setting, weaker asymptotic 
results  (than (\ref{Ealmost surely})) on $N^\X/v=N^\La/v$ 
can be deduced
by an entirely different approach, based on the theory of L\'evy processes and superprocesses. 
This link was initially discovered in \cite{bbs2,bbs1} in the special case of so-called Beta-coalescents,
and recently understood in the context of general $\La$-coalescents in \cite{bbl2}.
It is worthwhile pointing out, that for any ``true'' $\X$-coalescent (meaning that simultaneous
multiple collisions are possible in the dynamics), an approach 
analogous to \cite{bbl2} seems rather difficult to implement (to start with,
the expression
(\ref{D:psi xi}), unlike (\ref{D:psi la}), does not seem to be directly linked with any 
well-known stochastic process). 
Indeed, the martingale technique from \cite{bbl1} has at least three advantages: (i) 
it yields stronger forms of convergence, more precisely, (\ref{Ealmost surely}) and its counterparts in the
$L^p$-sense, for any $p\geq 1$ (cf.~\cite{bbl1} Theorems 1 and 2); 
it yields explicit error estimates needed in the frequency spectrum analysis 
(see \cite{bbl1,bbl2} for details);
and (iii) it extends 
to the $\X$-coalescent setting as will be explained shortly.

It is not surprising that the form of the ``candidate'' speed of CDI for the $\X$-coalescent 
is completely analogous to that for the $\La$-coalescent.
What may be surprising is that there are $\X$-coalescents that come down from infinity but their
candidate speed is identically infinite. And also that there might be coalescents 
that come down from infinity but faster than their finite candidate speed.
(Remark \ref{R cdi lam} in Section \ref{S:Xicoal further}
explains how neither of these can occur under a $\La$-coalescent mechanism.)
The question of whether an asymptotic speed still exists in such cases remains open. 
This discussion will be continued in Section \ref{S:improper joint}.

The rest of the paper is organized as follows: Section \ref{S:prelim} introduces various
processes of interest (including in Section \ref{S:two op} the novel 
``color-reduction'' and ``color-joining'' constructions
that might be of independent interest)
and presents a ``preview'' of the main result as Theorem \ref{Tsmalltime reg early}.
Section \ref{S:results} contains a ``matured'' 
statement of the main result (Theorem \ref{Tsmalltime reg}), followed by a
discussion of some of its immediate consequences, and of the significance of 
a certain ``regularity hypothesis'', while Section \ref{S:martingale} is devoted to 
the proof of Theorem \ref{Tsmalltime reg}.

\section{Definitions and preliminaries}
\label{S:prelim}
\subsection{Notation}
\label{S:notation} 
In this section we recall some standard notation, as well as gather
less standard notation that will be frequently used.

Denote the set of real 
numbers by $\R$
and set $\R_+ = (0,\iy)$.
For $a,b\in \R$, denote by $a \wedge b$ (resp.~$a \vee b$) the minimum (resp.~maximum)
of the two numbers.
Let
\begin{equation}
\label{EDel}
\Delta := \{(x_1,x_2,\ldots): x_1\geq x_2 \geq \ldots \geq 0,\, \sum_i x_i \leq 1\},
\end{equation}
be the infinite unit simplex.
If $\bx=(x_1,x_2,\ldots) \in \Delta$ and $c\in \R$, let 
\[
c\, \bx = (c x_1,c x_2,\ldots).
\]
Denote by $0$ the {\em zero} $(0,0\ldots,)$ in $\Delta$.

Let $\N:=\{1,2,\ldots\}$, and $\mathcal{P}$ be the set of partitions of $\N$.
Furthermore, for $n\in \N$ denote by
$\mathcal{P}_n$ the set of partitions of $[n]:=\{1,\ldots, n\}$.

If $f$ is a function, defined in a left-neighborhood $(s-\eps,s)$ of a point $s$,
 denote by $f(s-)$ the left limit of $f$ at $s$.
Given two functions $f,g:\R_+\to \R_+$, write
$f=O(g)$ if $\limsup f(x)/g(x) <\infty$,
$f=o(g)$ if $\limsup f(x)/g(x) =0$,
 and $f\sim g$ if $\lim f(x)/g(x) =1$.
Furthermore, write $f=\Theta(g)$ if both $f=O(g)$ and $g=O(f)$.
The point at which the limits are taken
is determined from the context.


If $\FF=(\FF_t,t\ge 0)$ is a filtration, and $T$ is a stopping time
relative to $\FF$, denote by $\FF_T$ the standard filtration generated by $T$,
see for example \cite{durrett}, page 389.

For $\nu$ a finite or $\sigma$-finite measure on $\Delta$ or on $[0,1]$, denote the
support of $\nu$ by ${\rm supp} (\nu)$.

\subsection{$\X$-coalescents}
\label{S:Xicoal}
Let $\X$ be a finite measure on $\Delta$, and write 
\[
\X=\X_0 + a \delta_0,
\]
where $a\geq 0$ and $\X_0((0,0,\ldots))=0$.
As noted in \cite{schweinsberg_xi}, we may assume without loss of generality that $\X$
is a probability measure.
The {\em $\X$-coalescent} driven by the above $\X$
is a Markov process $(\Pi_t,t\ge 0)$ with values
in ${\cal P}$ (the set of partitions of $\N$),
characterized in the following way. If $n \in \N$,
then the restriction
$(\Pi^{(n)}_t,t\ge 0)$
of $(\Pi_t,t\ge 0)$ to
$[n]$ is a Markov chain, taking values in $\mathcal{P}_n$,
such that
while $\Pi^{(n)}_t$ consists of $b$ blocks,
any given $k_1$-tuple, $k_2$-tuple,$\ldots$, and
$k_r$-tuple 
of its blocks (here $\sum_{i=1}^r k_i \leq b$ and $k_i\geq 2$, $i=1,\ldots,r$)
merge simultaneously (each forming one new block) at rate
\begin{equation*}
\label{rate coal xi}
\lambda_{b;k_1,\ldots,k_r;\s}=\int_{\Delta} \frac{\sum_{l=0}^{\s} \sum_{i_1,\ldots,i_{r+l}}
{\s \choose l} x_{i_1}^{k_1}\cdots x_{i_r}^{k_r} x_{i_{r+1}}\cdots x_{i_{r+l}} (1 - \sum_{i=1}^\iy x_i)^{\s-l} }{\sumd}\,\X(d\bx),
\end{equation*}
where $\s:=b-\sum_{i=1}^r k_i$ is the number of blocks that do not
participate in the merger event, and where the sum $\sum_{i_1,\ldots,i_{r+l}}$ in the above summation 
stands for the infinite sum
$\sum_{i_1=1}^\iy \sum_{i_2=1,i_2\neq i_1}^\iy \ldots \sum_{i_{r+l}=1,i_{r+l}\not \in\{i_1,\ldots, i_{r+l-1}\}}^\iy $ over $r+l$ different indices.
It is easy to verify that each such coalescent process has the same rate of pairwise merging
\begin{equation}
\label{E scaling}
\la_{2;2;0}=\X(\Delta)=1.
\end{equation}

\subsection{Preview of the small-time asymptotics}
\label{S:main preview}
One can now state the central result of this paper.
Given a probability measure $\X$ as above, for each $t>0$
denote by $N^\X(t)$ the number of blocks  at time $t$ in the 
corresponding (standard) $\X$-coalescent process.
Define
\[
\psi_{\X}(q) := \int_{\Delta}\frac{\sum_{i=1}^\iy(e^{-qx_i}-1+qx_i)}{\sumd}\,\X(d\bx),  \ q \geq 0,
\]
and 
\[
v_{\X} (t) : = \inf\left\{s>0: \int_s^{\infty} \frac1{\psi_\X(q)}\,dq <t\right\}, \ t>0.
\]
\begin{theorem} \label{Tsmalltime reg early}
If both 
\[
\X(\{\bx \in \Delta: \sum_{i=1}^n x_i = 1 \mbox{ for some finite }n\})=0
\]
and
\[
\int_\Delta \frac{(\sum_{i=1}^\iy x_i)^2}{\sumd} \,\X(d\bx) < \iy,
\] 
then
\begin{equation*}
\lim_{t\to 0+}\frac{N^\X(t)}{v_\X(t)} = 1, \mbox{ almost surely},
\end{equation*}
where $\iy/\iy \equiv 1$.
In particular, under the above assumptions,
the quantity $v_\X(t)$ is finite for (one and then for) all $t>0$, if and only if
the $\X$-coalescent comes down from infinity.
\end{theorem}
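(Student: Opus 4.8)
The plan is to transport the martingale method of \cite{bbl1} from the $\La$-coalescent to the present setting. The starting point is that, because the rates $\la_{b;k_1,\ldots,k_r;\s}$ depend on the configuration only through the number of blocks $b$, the number-of-blocks process $N=N^\X$ is itself a pure-death continuous-time Markov chain on $\N\cup\{\iy\}$. Its infinitesimal drift at level $n$ is most transparently computed from the paintbox description of a single collision: when a point $\bx\in\Delta$ is used, each of the $n$ blocks is independently given colour $i$ with probability $x_i$ (and kept a singleton with the remaining probability), blocks of a common colour then merging. If $B_i$ denotes the number of blocks receiving colour $i$, the decrease of $N$ equals $\sum_{i\ge1}(B_i-1)^+=\sum_{i\ge1}\big(B_i-\indica{B_i\ge1}\big)$, whose mean is $\sum_{i\ge1}\big(nx_i-1+(1-x_i)^n\big)$; integrating against $\X_0(d\bx)/\sumd$ and adding the Kingman term gives
\[
\phi(n)=a\binom n2+\int_\Delta\frac{\sum_{i\ge1}\big(nx_i-1+(1-x_i)^n\big)}{\sumd}\,\X_0(d\bx).
\]
The first analytic task is to prove $\phi(n)\sim\psi_\X(n)$ as $n\to\iy$. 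Since $1-x\le e^{-x}$, the integrand differs from that of $\psi_\X$ (with $q=n$) by a lower-order term, and $a\binom n2$ matches the $a\delta_0$-contribution $aq^2/2$ to $\psi_\X$; hypothesis (2) is what guarantees the relevant integrals are finite (so that $\psi_\X(q)<\iy$ and $\phi(n)<\iy$ at all), while hypothesis (1) removes the points at which the comparison degenerates, namely the dust-free, finitely-many-colour configurations that would send infinitely many blocks instantly onto finitely many.

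With $\phi\sim\psi_\X$ established, the candidate deterministic trajectory is the solution of $\dot v=-\psi_\X(v)$, equivalently the $v_\X$ defined in the statement through $\int_{v_\X(t)}^\iy dq/\psi_\X(q)=t$. I would then write, for the chain started from any finite level (passing to the standard version by consistency of the restrictions $\Pi^{(n)}$), the compensated martingale
\[
M(s):=N(s)+\int_0^s\phi(N(u))\,du,
\]
whose predictable quadratic variation is $\int_0^s\sigma^2(N(u))\,du$ with $\sigma^2(n)$ the second moment of the downward jump of $N$ from $n$. The same paintbox computation bounds $\sigma^2(n)$ by a multiple of $n^2\int_\Delta(\sum_i x_i)^2/\sumd\,\X(d\bx)$ plus the Kingman part, so hypothesis (2) forces the fluctuations of $M$ to be of smaller order than $v_\X$. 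Comparing the integral equation for $N$ with the one defining $v_\X$ by a Gr\"onwall-type estimate then yields $N(t)/v_\X(t)\to1$ in probability.

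To upgrade to almost-sure convergence I would follow \cite{bbl1}: prove the estimate along a geometric sequence $t_k\downarrow0$, apply Borel--Cantelli, and interpolate between consecutive times using the monotonicity of $N$ together with control of the ratio $v_\X(t)/v_\X(t_{k})$ for $t\in[t_{k+1},t_k]$. Finally, the ``in particular'' clause follows by combining $\phi\sim\psi_\X$ with Schweinsberg's integral test for coming down from infinity \cite{schweinsberg_xi}: the coalescent comes down iff $\int^\iy dq/\phi(q)<\iy$ iff $\int^\iy dq/\psi_\X(q)<\iy$, and the last integral is finite iff the set defining $v_\X(t)$ is non-empty, i.e.\ iff $v_\X(t)<\iy$ for one (hence every) $t>0$. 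In the complementary case this integral diverges, $v_\X\equiv\iy$, and both sides of \eqref{Ealmost surely} equal $+\iy$, so the convention $\iy/\iy\equiv1$ closes the argument.

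The step I expect to be the main obstacle is the passage from single to \emph{simultaneous} multiple collisions. In \cite{bbl1} each jump corresponds to one family merging, whereas here a single event changes $N$ by $\sum_{i\ge1}(B_i-1)^+$, a sum of many dependent binomial contributions; consequently neither the drift asymptotics $\phi\sim\psi_\X$ nor the bound on $\sigma^2$ can simply be quoted. The cleanest route is likely to sandwich the $\X$-coalescent between comparison processes produced by the ``colour-reduction'' and ``colour-joining'' devices announced in Section~\ref{S:two op}, which replace the joint effect of several colours by one amenable to the single-family ($\La$) machinery, after verifying that hypotheses (1) and (2) are preserved. Making the contribution of the measure near $\{\sum_i x_i=1\}$ genuinely negligible in both the drift and the quadratic variation is the delicate point on which the whole estimate rests.
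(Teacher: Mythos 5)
Your skeleton (paintbox drift computation, comparison with $v_\X$, Borel--Cantelli upgrade along a geometric sequence) points in the right direction, but the step on which everything rests fails. You compensate $N$ itself, $M(s)=N(s)+\int_0^s\phi(N(u))\,du$, and claim that the regularity hypothesis ``forces the fluctuations of $M$ to be of smaller order than $v_\X$''. This is false: the predictable quadratic variation of $M$ is $\int_0^s\sigma^2(N(u))\,du$ with $\sigma^2(n)=\Theta(n^2)$, and since each merger event decreases $N$ by at least $1$, already the realized quadratic variation satisfies $[M](s)\geq N(0)-N(s)$; so for the chain started from $n$ blocks the fluctuations of $M$ over $[0,s]$ are of order at least $\sqrt{n}$, which dwarfs $v_\X(s)$ as $n\to\iy$ (and for the standard version the quadratic variation is infinite near $0$). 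The actual closeness of $N$ to $v_\X$ comes entirely from the restoring drift ($\psi_\X$ increasing), which a Doob-plus-Gr\"onwall bound on the linear scale cannot exploit. This is precisely why the paper works with $\log N$ throughout: Lemma \ref{Lbinlogcalc} and Proposition \ref{martingale estimates} show that, under (\ref{ERcond}), the drift of $\log N(s)$ is $-\psi(N(s))/N(s)+h(s)$ with $h$ uniformly bounded and that $E[(d\log N(s))^2|\FF_s]\le C\,ds$ \emph{uniformly in the state}, so Doob's $L^2$-inequality gives $\sup_{t\le s}|\log(N/v)|\le 2s^{\alpha}$ with probability $1-O(s^{1-2\alpha})$ (via Lemma \ref{Lincreasing} and \cite{bbl1} Lemma 10). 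That logarithmic device --- with (\ref{ERcond}) entering exactly through the bound $E[(Z^{(n)})^2]\le S(\bx)$, i.e.\ through the covariances of the dependent colour counts $Y_k^{(n)},Y_\ell^{(n)}$ --- is the missing idea, and no version of your linear-scale estimate can substitute for it.

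Two further problems. First, your fallback of sandwiching $\Pi$ between its colour-joining and $\delta$-colour-reduction cannot produce the sharp constant $1$: the colour-joining is a $\La$-coalescent whose $\psi^j$ in (\ref{Epsi join}) is in general strictly larger than $\psi_\X$ (superadditivity of $z\mapsto e^{-z}-1+z$), and the gap can be of a different order of magnitude even under (\ref{ERcond}) --- the example of Section \ref{S:improper joint} with $f(n)=n^{-2}$ is regular, has $\psi_\X(q)=\Theta(q\log q)$, yet $\psi^j(q)=\Theta(q^{3/2})$ --- so the lower envelope it provides is not asymptotic to $v_\X$; the paper uses these couplings only to discuss the non-regular case, never in the proof of the theorem. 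Second, two steps you gloss over require genuine work: (i) passing from bounds for the prelimit $\Pi^{(n)}$ to the standard version is not a mere ``consistency'' remark --- the paper must introduce the random shift $X_z$ defined by $N(z)=v(X_z+z)$ and restart martingales at time $z$, then send $z\to 0$ (Part II of Section \ref{S:smalltime}); (ii) your ``in particular'' clause quotes Schweinsberg's integral test as an equivalence, but in \cite{schweinsberg_xi} necessity (Proposition 33) holds only under an extra integrability condition near $\Delta\setminus\Delta^\eps$ (it is implied by (\ref{ERcond}), but you must say so), and indeed there are $\X$-coalescents that come down from infinity while $\sum_b\gamma_b^{-1}=\iy$; the paper instead obtains the equivalence as a by-product of the martingale estimates applied to $\Pi^{(n)}$ and the monotone limits $v^n\uparrow v_\X$.
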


Most of the sequel is devoted to explaining
the above implicit definition of the speed $v_\X$, as well as
the significance of the two hypotheses in Theorem \ref{Tsmalltime reg early}.

As already mentioned, Theorem \ref{Tsmalltime reg early} is
restated as Theorem \ref{Tsmalltime reg} in Section
\ref{S:results}, which is proved in Section \ref{S:martingale}.
The additional condition $\X(\{0\})=0$ in Theorem \ref{Tsmalltime reg} is not really 
restrictive, since the case where $\X(\{0\})>0$ is already well-understood 
(cf.~Remark \ref{R:atom zero} below).

\subsection{Basic properties of $\X$-coalescents}
\label{S:Xicoal further}
Recall the setting and notation of Section \ref{S:Xicoal}.

If ${\rm supp}(\X)\subset\{(x,0,0,\ldots): x\in [0,1]\}$,
the resulting $\X$-coalescent is usually called
the {\em $\Lambda$-coalescent},
where $\La$ is 
specified by 
\begin{equation}
\label{ELa from Xi}
\La(dx):=\X(d(x,0,\ldots)).
\end{equation}
The transition mechanism simplifies as follows: 
whenever $\Pi^{(n)}_t$ consists of $b$ blocks, the
rate at which any given $k$-tuple of its blocks merges into a single block equals
\begin{equation}
\label{rate coal la}
\lambda_{b,k}=\int_{[0,1]}x^{k-2}(1-x)^{b-k}\Lambda(dx).
\end{equation}
Note that mergers of several blocks into one are still possible here, but
multiple mergers cannot occur simultaneously. 

We recall several
properties of the $\X$-coalescents carefully established in \cite{schweinsberg_xi}, 
the reader is referred to this article for details.
The $\X$-coalescents can be constructed via
a Poisson point process in
the following way. 
Assume that $\X=\X_0$, or equivalently
that $\X$ does not have an atom at $0$ (see also Remark \ref{R:atom zero} below).
Let
\begin{equation}
\label{DPPPpi}
\pi(\cdot) = \sum_{k \in \N } \delta_{t_k,\bx_k}(\cdot)
\end{equation}
be a Poisson point process on $\R_+ \times \Delta$ with intensity measure
$dt \otimes \X(d\bx)/\sumd$.
Each atom $(t,\bx)$ of $\pi$ influences the evolution of the process $\Pi$ as
follows:
to each block of $\Pi(t-)$ assign a random ``color'' in an i.i.d.~fashion
(also independently of the past)
where the colors take values in $\N\cup (0,1)$
and their common distribution $P_\bx$ is specified by
\begin{equation}
\label{Dcoloring}
P_\bx(\{i\})=x_i, \ i\geq 1 \ \mbox{ and } P_\bx(du)=(1-\sum_{i=1}^\iy x_i)\,du,\ u\in (0,1);
\end{equation}
given the colors, 
merge immediately and simultaneously all the blocks of equal color into a single block
(note that this can happen only for integral colors),
while leaving the blocks of unique color unchanged.

Note that in order to make this construction rigorous,
one should first consider the restrictions
$(\Pi^{(n)}(t),t\ge 0)$, since the measure $\X(d\bx)/\sumd$ may have 
(and typically will have in the cases of interest) infinite total mass.
Given a fixed time $s>0$, a small $\eps>0$ and any $n\in \N$,
it is straightforward to run the above procedure 
using only the finite number of atoms of $\pi$ that are contained in
$[0,s] \times \{\bx\in \Delta : \sumd > \eps\}$.
Denote the resulting process by $(\widetilde{\Pi}^{(n),\eps}(t),t\in [0,s])$.
The following observation is essential:
the atoms of $\pi$ contained in the ``complement'' $[0,s] \times \{\bx\in \Delta : \sumd \leq \eps\}$, 
together with the coloring procedure,
influence further the state of $(\widetilde{\Pi}^{(n),\eps}(t),t\in [0,s])$ 
on the event $A_{\eps;s,n}$ (by causing additional mergers during $[0,s]$),
where 
\[
P(A_{\eps;s,n})\leq 
1- \exp\left(\int_0^s dt \int_{\Delta \cap \{\sumd \leq \eps\}} 
\frac{R_\bx(n)}{\sumd}
\,\X(d\bx)\right),\]
and where the right-hand-side goes to $0$ as $\eps\to 0$.
Indeed, $R_\bx(n)$ is the probability that under $P_\bx$ (assuming $n$ blocks present at time $t-$) 
at least two of the blocks are colored by the same color.
For the benefit of the reader we include the exact expression for this probability:
\begin{eqnarray*}
R_\bx(n)&=& 1- (1-\sum_{i=1}^\iy x_i)^n - n\sum_{i=1}^\iy x_{i} (1-x_{i})^{n-1} \\
&-&{n\choose 2}\sum_{i_1=1}^\iy \sum_{i_2=1,i_2\neq i_1}^\iy x_{i_1} x_{i_2} (1-x_{i_1}-x_{i_2})^{n-2} - \ldots\\\
&-& n \sum_{i_1=1}^\iy \ldots \sum_{i_{n-1}=1,i_{n-1}\not \in \{i_1,i_2,\ldots,i_{n-2}\}}^\iy
\prod_{\ell =1}^{n-1} x_{i_\ell} (1-\sum_{\ell=1}^{n-1}x_{i_\ell}) \\ 
&-& 
\sum_{i_1=1}^\iy \ldots \sum_{i_n=1,i_n\not \in \{i_1,i_2,\ldots,i_{n-1}\}}^\iy \prod_{\ell =1}^n x_{i_\ell}.
\end{eqnarray*}
Note that $R_\bx(n)\leq {n\choose 2}\sumd$, so that 
\[
\int_0^s dt \int_{\Delta \cap \{\sumd \leq \eps\}} 
\frac{R_\bx(n)}{\sumd}
\,\X(d\bx) \leq s {n\choose 2} \int_{\Delta \cap \{\sumd \leq \eps\}} \,\X(d\bx) \to 0, \mbox{ as } \eps \to 0.
\]
In this way
one obtains a coupling (that is, a simultaneous construction on a single probability space)
of the family of processes $(\widetilde{\Pi}^{(n),\eps}(t),\, t\in [0,s])$, as $\eps \in (0,1)$,
and can define ${\Pi}^{(n)}$ as the limit
${\Pi}^{(n)}:= \lim_{\eps\to 0} \widetilde{\Pi}^{(n),\eps}$ on [0,s].
Moreover, the above construction is amenable to appending particles/blocks to the initial configuration,
hence it yields a coupling of   
\begin{equation}
\label{Ecoupl eps n}
(\widetilde{\Pi}^{(n),\eps}(t),\, t\in [0,s]),\mbox{  as }\eps \in (0,1)\mbox{ and }n\in \N.
\end{equation}
An interested reader is invited to check (or see~\cite{schweinsberg_xi}) that the limit 
\begin{equation}
\label{Exi constr}
{\Pi}:=\lim_{n\to \iy} {\Pi}^{(n)}
\end{equation}
is a well-defined realization of the $\X$-coalescent, corresponding
to the measure $\X$. 
We will denote its law simply by $P$ (rather than by $P_\X$). 

Consider the above $\X$-coalescent process $\Pi$.
Let $E=\{N(t)=\infty$ for all $t\geq 0\}$, and 
$F=\{N(t)<\infty$ for all $t>0\}$.
\begin{defn}
We say that a $\X$-coalescent {\em comes down from infinity} if $P(F)=1$.
\end{defn}
Let
\begin{equation}
\label{EDelfin}
\Delta_f := \{\bx \in \Delta: \sum_{i=1}^n x_i = 1 \mbox{ for some finite }n\}.
\end{equation}
Lemma 31 \cite{schweinsberg_xi} extends Proposition 23 of Pitman \cite{pit99} to the
$\X$-coalescent setting.
It says that provided $\X(\Delta_f)=0$, there are two possibilities for the 
evolution of $N$:
either $P(E)=1$ or $P(F)=1$.
\begin{remark}
\label{R:atom zero}
A careful reader will note that the above Poisson point process (PPP) construction assumed
$\X(\{0\})=0$.
It is possible to enrich it with extra pairwise mergers if $\X(\{0\})>0$, see \cite{schweinsberg_xi} for details.
For the purposes of the current study this does not seem to be necessary. 
Indeed, by the argument of \cite{bbl1} Section 4.2, one can easily see 
that if $\X((0,0,\ldots,))=a>0$, then the corresponding $\X$-coalescent comes down from infinity, and moreover its speed of CDI is 
determined by $a$. 
More precisely, such a $\X$-coalescent comes down faster than the $\La$-coalescent 
$(\Pi_a(s),s\geq 0)$ corresponding to 
$\La(dx)=a \delta_0(dx)$
(note that $\Pi_a$ is just a time-changed Kingman coalescent), and slower than 
 $(\Pi_a((1+\eps)s),s\geq 0)$, for any $\eps>0$.
\end{remark}
In the rest of this paper we will assume that 
$\X(\{0\})=0$, or equivalently, that $\X=\X_0$.
\begin{remark}
\label{R pEpF}
The condition $\X(\Delta_f)=0$ is similar (but not completely analogous, see next remark)
to the condition $\Lambda(\{1\})=0$ for $\La$-coalescents.
It is not difficult to construct a $\X$-coalescent, 
such that $\X(\Delta_f)>0$ and $P(E)=P(F)=0$. Take some probability measure $\X'$ on $\Delta$ 
such that the corresponding $\X$-coalescent does
not come from infinity, and define 
$\X=(1-a)\X'+ a \nu$, for some $a\in (0,1)$ and some probability measure $\nu$ on $\Delta_f$
(for example $\nu(d\bx)=\delta_{(1/2,1/2,0,\ldots)}(d\bx)$).
Then its block counting process stays infinite for all times strictly smaller than $T_*$,
and it is finite for all times larger than or equal to $T_*$, where
\[
T_*:=\inf\{s: \pi(\{s\}\times \Delta_f)>0\} 
\]
has exponential (rate $a$) distribution, hence is strictly positive with probability $1$.
\end{remark}
\begin{remark}
\label{R delfin}
It may be surprising that there are measures $\X$ satisfying $\X(\Delta_f)=1$,
and such that the corresponding $\X$-coalescent comes down from infinity.
Note that there is no analogy in the setting of $\La$-coalescents, since if $\La(\{1\})=1$
(and therefore $\La([0,1))=0$), the only such ``$\La$-coalescent'' will 
contain a single block for all times.

It was already observed by Schweinsberg \cite{schweinsberg_xi} Section 5.5 that if the quantity
\[
\int_{\Delta_f} \frac{1}{\sumd}\,\X(d\bx) 
\]
is infinite, the corresponding $\X$-coalescent comes down from infinity.
Moreover, if the above quantity is positive and finite, the corresponding 
$\X$-coalescent comes down from infinity if and only if the $\X$-coalescent
corresponding to $\X'(d\bx)=\X(d\bx)\indica{\bx \in \Delta\setminus \Delta_f}$ comes down from
infinity, and the speed of CDI is determined by $\X'$. 
This type of coalescent was already mentioned in Remark \ref{R pEpF}. The reader should note that 
if such a $\X$-coalescent does not come down from infinity, then $P(E)=P(F)=0$.
\end{remark}
Henceforth we will mostly assume that 
$\X(\Delta_f)=0$.

\subsection{Coming down from infinity revisited}
\label{S:CDIrev}
In this section we assume that $\X(\Delta_f)=0$, as well as $\X(\{0\})=0$.
A sufficient condition for a $\X$-coalescent to
come down from infinity was given by Schweinsberg
\cite{schweinsberg_xi}.  
For $k_i$, $i=1,\ldots,r$ such that 
$k_i\geq 2$ and $\s:=b-\sum_{i=1}^r k_i \geq 0$,
define $N(b;k_1,\ldots,k_r;\s)$ to be the number of different simultaneous choices of a
$k_1$-tuple, a $k_2$-tuple,$\ldots$ and a $k_r$-tuple from a set of $b$ elements.
The exact expression for $N(b;k_1,\ldots,k_r;\s)$ is not difficult to find
(also given in \cite{schweinsberg_xi} display (3)),
but is not important for the rest of the current analysis.
Let 
\[
\gamma_b := \sum_{r=1}^{\lfloor b/2 \rfloor}
\sum_{\{k_1,\ldots,k_r\}} (b-r-\s)
N(b;k_1,\ldots,k_r;\s) \la_{b;k_1,\ldots,k_r;\s}
\]
be the total rate of decrease in the number of blocks for the $\X$-coalescent, when the
current configuration has precisely $b$ blocks.

Given a configuration consisting of $b$ blocks and an $\bx \in \Delta$,
consider the coloring procedure (\ref{Dcoloring}), and define
\begin{equation}
\label{EYs}
Y_\ell^{(b)}:=\sum_{j=1}^b \indica{\text{$i$th block has color $\ell$}}, \ \ell \in \N,
\end{equation}
so that $Y_\ell^{(b)}$ has Binomial($b,x_\ell$) distribution.
Due to the PPP construction of the previous subsection, we then have
\begin{eqnarray}
\gamma_b 
\bck&=&\bck \int_{\Delta} \frac{\sum_{\ell=1}^\infty E(Y_\ell^{(b)} -\indica{Y_\ell^{(b)} >0})}{\sumd} \,\X(d\bx)\nonumber\\
\bck&=&\bck \int_{\Delta} \frac{\sum_{\ell=1}^\infty(bx_\ell-1 +(1-x_\ell)^b)}{\sumd} \,\X(d\bx)\label{Egamma}
\end{eqnarray}

Proposition 32 in \cite{schweinsberg_xi}  says that the $\X$-coalescent comes down from infinity if 
\begin{equation}
\label{E sch cond}
\sum_{b=2}^{\infty} \gamma_b^{-1} <\infty.
\end{equation}
Let $\eps \in (0,1)$ be fixed. 
Recall (\ref{EDel}) and define
\begin{equation}
\label{EDeleps}
\Delta^\eps := \{\bx \in \Delta: \, \sum_i x_i \leq 1-\eps\}.
\end{equation}
Proposition 33 in \cite{schweinsberg_xi} says that (\ref{E sch cond}) is necessary for coming down from
infinity if also
\[
\int_{\Delta\setminus \Delta^\eps} \frac{1}{\sumd}\, \X(d\bx) < \iy, \ \mbox{ for some } \eps>0.
\]
Moreover \cite{schweinsberg_xi} provides an example of a $\X$-coalescent that comes down from
infinity, but does not satisfy
(\ref{E sch cond}). More details are given in Section \ref{S:improper joint}.

The CDI property for $\La$-coalescents is, in comparison, completely understood.
Define
\begin{equation}
\label{D:psi la}
\psi_{\La}(q):= \int_{[0,1]}\frac{(e^{-qx}-1+qx)}{x^2}\La(dx),
\end{equation}
and note that $\gamma_b$ simplifies to
$ \sum_{k=2}^b (k-1){b \choose k} \lambda_{b,k}$, with $\la_{b,k}$ as in (\ref{rate coal la}).
The original sharp criteria is due to Schweinsberg \cite{sch1}:
a particular $\La$-coalescent comes down from infinity if and only if
(\ref{E sch cond}) holds.
Bertoin and Le Gall \cite{blg3} observed that 
\begin{equation}
\label{E blg obs}
\gamma_b= \Theta(\psi_\La(b)),
\end{equation}
 and that therefore the CDI happens if and only if
\begin{equation}
\label{E blg cond}
 \int_a^{\infty} \frac{dq}{\psi_\La(q)} <\infty,
\end{equation}
for some (and then automatically for all) $a >0$.
\begin{remark}
\label{R cdi lam}
A variation of the argument from Berestycki et al.~\cite{bbl1} 
provides an independent (probabilistic) 
derivation of the last claim.
More precisely,
let $N^n(t)=\# \Pi^{(n)}(t)$, $t\geq 0$ and let
$v^n$ be the unique solution of the following Cauchy problem
\[
v'(t)=-\psi(v(t)),\ \ v(0)=n.
\]
Use the argument of \cite{bbl1} Theorem 1 (or see 
Part I in Section \ref{S:smalltime} for analogous
argument in the $\X$-coalescent setting) to find $n_0<\iy$, $\alpha\in (0,1/2)$,
and $C<\infty$ such that
\[
\bigcap_{n\geq n_0} \left\{\sup_{t\in [0,s]}\left|\frac{N^n(t)}{v^n(t)} -1\right| \leq Cs^\alpha\right\}
\]
happens with overwhelming (positive would suffice) probability, uniformly in small $s$.
Finally, note that $v^n$ satisfies the identity
\begin{equation}
\label{Evns}
\int_{v^n(s)}^n \frac{dq}{\psi(q)}=s, \ s\geq 0,
\end{equation}
therefore $\lim_n v^n(s)<\iy$ if and only if (\ref{E blg cond}) holds.
\end{remark}
\noindent
Moreover, it was shown in \cite{bbl1} that under condition (\ref{E blg cond}), 
a speed $t\mapsto v(t)$ of CDI is specified by
\begin{equation}
\label{Ev la}
\int_{v(t)}^\infty \frac{dq}{\psi_\La(q)} = t, \ t\geq 0.
\end{equation}

Consider again the general $\X$-coalescent setting.
In analogy to (\ref{D:psi la}), define
\begin{equation}
\label{D:psi xi}
\psi(q)\equiv \psi_{\X}(q):= \int_{\Delta}\frac{\sum_{i=1}^\iy(e^{-qx_i}-1+qx_i)}{\sumd}\,\X(d\bx).
\end{equation}
Note that the above integral converges since 
\begin{equation}
\label{Ecalcfact}
e^{-z}-1+z\leq z^2/2, \mbox{ for all }z\geq 0,
\end{equation}
and so in particular $\psi_\X(q)\leq q^2/2$, for any probability measure $\X$ on $\Delta$.
It is easy to check that $q\mapsto \psi_\X(q)$ is an infinitely differentiable, strictly increasing, and convex
function on $\R_+$, as well as that $\psi_\X(q)\sim q^2/2$ as $q\to 0$. 
Therefore, if
\begin{equation}
\label{E me cond}
\int_a^\infty \frac{dq}{\psi(q)}<\iy
\end{equation}
for some $a>0$, the same will be true for all $a>0$,
and irrespectively of that $\int_0^a dq/\psi(q)=\iy$, for any $a>0$.
\begin{lemma}
\label{Lincreasing}
The function $q\mapsto \psi(q)/q$ is strictly increasing.
\end{lemma}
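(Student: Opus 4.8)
The plan is to reduce the statement to a one-dimensional monotonicity fact that holds pointwise in $\bx$, and then to integrate. First I would introduce the scalar function $\phi(u):=e^{-u}-1+u$, $u\ge 0$, so that, pulling the factor $1/q$ inside the integral and writing each nonzero summand as $\phi(qx_i)/q=x_i\,\phi(qx_i)/(qx_i)$,
\[
\frac{\psi(q)}{q}=\int_{\Delta}\frac{1}{\sumd}\,\sum_{i=1}^\iy x_i\,\frac{\phi(q x_i)}{q x_i}\,\X(d\bx),
\]
the $i$-th summand being read as $0$ when $x_i=0$. The key observation is that it then suffices to prove that the scalar map $u\mapsto \phi(u)/u$ is strictly increasing on $(0,\iy)$: granting this, for each $i$ with $x_i>0$ the term $x_i\,\phi(qx_i)/(qx_i)$ is strictly increasing in $q$ (since its argument $qx_i$ increases with $q$), so for every $\bx$ the integrand above is nondecreasing in $q$, and in fact strictly increasing as soon as at least one coordinate $x_i$ is positive.

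Next I would establish the scalar claim by a direct computation. Writing $g(u):=\phi(u)/u$ and using $\phi'(u)=1-e^{-u}$,
\[
g'(u)=\frac{\phi'(u)\,u-\phi(u)}{u^2}=\frac{(1-e^{-u})u-(e^{-u}-1+u)}{u^2}=\frac{1-e^{-u}(1+u)}{u^2}.
\]
Hence $g'(u)>0$ on $(0,\iy)$ is equivalent to the elementary inequality $1+u<e^{u}$, valid for all $u>0$; this is the crux of the computation and follows immediately from the power series of $e^u$. Thus $g$ is strictly increasing on $(0,\iy)$, as required.

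Finally I would upgrade this pointwise monotonicity to strict monotonicity of the integral. Fix $0<q_1<q_2$. By the previous two steps the integrand
\[
\bx\mapsto \frac{1}{\sumd}\sum_{i=1}^\iy x_i\,\Bigl(\frac{\phi(q_2 x_i)}{q_2 x_i}-\frac{\phi(q_1 x_i)}{q_1 x_i}\Bigr)
\]
is nonnegative, and it is strictly positive whenever $\bx\neq 0$. Since we work under the standing assumption $\X(\{0\})=0$ and $\X$ is a probability measure, the set $\{\bx\in\Delta:\bx\neq 0\}$ carries full $\X$-mass, so the integral of this nonnegative, $\X$-a.e.~positive integrand is strictly positive; that is, $\psi(q_2)/q_2>\psi(q_1)/q_1$. (Integrability poses no problem, as each piece is dominated by the integrand of the finite quantity $\psi$.) I expect no deep obstacle here: the only point genuinely requiring care is this last strictness step, where one must invoke $\X(\{0\})=0$ to exclude the degenerate case in which the integrand vanishes $\X$-almost everywhere; everything else rests on the elementary inequality $1+u<e^{u}$.
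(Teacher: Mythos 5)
Your proof is correct, and it is precisely the ``straightforward'' argument the paper intends when it leaves the proof to the reader as analogous to Lemma 9 of \cite{bbl1}: write $\psi(q)/q$ as an integral of $\sum_i x_i\,\phi(qx_i)/(qx_i)$ with $\phi(u)=e^{-u}-1+u$, reduce to strict monotonicity of $u\mapsto\phi(u)/u$ via the inequality $1+u<e^u$, and integrate, invoking $\X(\{0\})=0$ for strictness. No gaps.
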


\noindent
The proof (straightforward and left to the reader) is analogous to that for \cite{bbl1} Lemma 9. 
\begin{lemma}
The conditions (\ref{E sch cond}) and (\ref{E me cond}) are equivalent.
\end{lemma}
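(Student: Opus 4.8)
The plan is to reduce the equivalence to the single estimate $\gamma_b=\Theta(\psi(b))$, the exact analogue of (\ref{E blg obs}) in the present setting, and then to pass routinely between the series in (\ref{E sch cond}) and the integral in (\ref{E me cond}). For the latter passage, recall that $\psi$ is strictly increasing; hence on $q\in[b,b+1]$ one has $\psi(b)\le\psi(q)\le\psi(b+1)$, and summing the resulting bounds on $\int_b^{b+1}dq/\psi(q)$ gives
\[
\sum_{b\ge 3}\frac{1}{\psi(b)} \le \int_2^\iy \frac{dq}{\psi(q)} \le \sum_{b\ge 2}\frac{1}{\psi(b)} .
\]
Thus (\ref{E me cond}) holds iff $\sum_b\psi(b)^{-1}<\iy$, and once $\gamma_b=\Theta(\psi(b))$ is known this is equivalent to $\sum_b\gamma_b^{-1}<\iy$, i.e.\ to (\ref{E sch cond}).

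It remains to establish $\gamma_b=\Theta(\psi(b))$, which I would do term by term inside the integrals (\ref{Egamma}) and (\ref{D:psi xi}). Both integrands are assembled, coordinate by coordinate, from the scalar functions
\[
\phi_b(x) := bx - 1 + (1-x)^b \quad\text{and}\quad \eta_b(x) := bx - 1 + e^{-bx}, \qquad x\in[0,1].
\]
It therefore suffices to prove a uniform scalar comparison: there is a constant $c\in(0,1]$, independent of $b\ge 2$ and $x\in[0,1]$, with $c\,\eta_b(x)\le\phi_b(x)\le\eta_b(x)$. Summing over the coordinates $x_\ell$ of $\bx$, dividing by $\sumd$, and integrating against $\X$ then yields $c\,\psi(b)\le\gamma_b\le\psi(b)$.

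The upper bound is immediate from $(1-x)^b\le e^{-bx}$ (both summands being nonnegative: $\phi_b(0)=0$ and $\phi_b'(x)=b(1-(1-x)^{b-1})\ge 0$, while $\eta_b\ge 0$ by (\ref{Ecalcfact})). The lower bound $\phi_b\ge c\,\eta_b$ is the crux, and I expect it to be the only delicate point. Since $\phi_b(0)=\eta_b(0)=0$, it is enough to compare derivatives and integrate from $0$:
\[
\frac{\phi_b'(x)}{\eta_b'(x)} = \frac{1-(1-x)^{b-1}}{1-e^{-bx}} \ge \frac{1-e^{-(b-1)x}}{1-e^{-bx}} \ge \frac{b-1}{b} \ge \tfrac12 ,
\]
where the first inequality uses $(1-x)^{b-1}\le e^{-(b-1)x}$ and the second follows from the elementary fact that $w\mapsto e^{-\alpha w}-\alpha e^{-w}$ is nonincreasing with value $1-\alpha$ at $w=0$, applied with $\alpha=(b-1)/b$ and $w=bx$. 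Integrating gives $\phi_b\ge\tfrac12\,\eta_b$, so $c=\tfrac12$ works; the remaining steps are the bookkeeping above. (Alternatively, one may observe that this scalar comparison is precisely the one underlying the $\La$-coalescent estimate (\ref{E blg obs}) and simply invoke it.)
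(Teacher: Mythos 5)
Your proof is correct, and while it follows the paper's overall skeleton (reduce the lemma to the order-of-magnitude equivalence $\gamma_b=\Theta(\psi_\X(b))$, then compare series and integral), the way you establish that key estimate is genuinely different. The paper proves the additive bound $0\le e^{-bx}-(1-x)^b\le bx^2$ for $b\ge 16$ (this is (\ref{E easy bound})), which after summing over coordinates and integrating gives $\gamma_b=\psi(b)+O(b)$ with $O(b)\in[-b,0]$; since the error $b$ need not be negligible relative to $\psi(b)$, the paper must then show $b=O(\gamma_b)$ via the discrete convexity $\gamma_{b+1}-\gamma_b\ge\gamma_b-\gamma_{b-1}\ge 0$, and finish with a case split ($\psi(b)=O(b)$ versus $b=o(\psi(b))$) using convexity of $\psi$. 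Your argument replaces all of this by the pointwise multiplicative comparison $\tfrac12\,\eta_b(x)\le\phi_b(x)\le\eta_b(x)$ on $[0,1]$, obtained from the derivative-ratio bound
\[
\frac{\phi_b'(x)}{\eta_b'(x)}=\frac{1-(1-x)^{b-1}}{1-e^{-bx}}\ \ge\ \frac{1-e^{-(b-1)x}}{1-e^{-bx}}\ \ge\ \frac{b-1}{b}\ \ge\ \tfrac12,
\]
whose two inequalities are both valid (the second is exactly your monotonicity fact for $w\mapsto e^{-\alpha w}-\alpha e^{-w}$ with $\alpha=(b-1)/b$). Summing over coordinates and integrating against $\X(d\bx)/\sumd$ then gives the uniform two-sided bound $\tfrac12\psi(b)\le\gamma_b\le\psi(b)$ for all $b\ge 2$ in one stroke, with no case analysis, no lower bound on $b$, and no need for the convexity of $b\mapsto\gamma_b$ or of $\psi$. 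What you give up relative to the paper is the asymptotic equivalence $\gamma_b/\psi(b)\to 1$, which the additive bound delivers precisely in the regime $b=o(\psi(b))$ relevant to coming down from infinity; but the lemma only requires $\Theta$, so your route is a complete and arguably cleaner proof, with explicit constants. The series-versus-integral bookkeeping you spell out is left implicit in the paper ("it suffices to show (\ref{E blg obs})") and you handle it correctly.
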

\begin{proof}
It suffices to show the order of magnitude equivalence (\ref{E blg obs}) in the current setting.
Use expression (\ref{Egamma}) for $\gamma_b$. 
Note that if $x\in [0,1]$ and $b\geq 1$, then
$e^{-bx}\geq (1-x)^b$, in fact for $x\in [0,1)$
\[
e^{-bx} - (1-x)^b  = e^{-bx} \left(1- \exp\left\{-b\left(\frac{x^2}{2}+\frac{x^3}{3}+\ldots\right)\right\}\right).
\]
If $x\geq 1/4$ and $b\geq 16$ it is clearly true that $e^{-bx} - (1-x)^b \leq b x^2 $.
For $x\leq 1/4$ we have 
$\sum_{j\geq 2}x^j \leq 4 x^2/3$, and so 
\begin{equation*}
1- \exp \left\{-b\left(\frac{x^2}{2}+\frac{x^3}{3}+\ldots
\right)\right\} 
 \le 1 - \exp \left(-\frac{2}{3} bx^2\right) 
 \le \frac{2}{3} bx^2.
\end{equation*}
We conclude that
\begin{equation}
\label{E easy bound}
0\leq e^{-bx} - (1-x)^b \leq b x^2, \mbox{ for all }b\geq 16 \mbox{ and }x \in [0,1].
\end{equation}

\noindent
As a consequence,
\begin{equation}
\label{E exp inst pow}
\sum_{\ell=1}^\iy E(Y_\ell^{(b)} -\indica{Y_\ell^{(b)}>0})= \sum_{\ell=1}^\iy (bx_\ell -1 + e^{-bx_\ell}) + b\,O(\sumd),
\end{equation}
where $O(\sumd)\in [-\sumd,0]$.
By integrating over $\X(d\bx)/\sumd$, we get
\[
\gamma_b = \psi(b) + O(b), \mbox{ for some } O(b) \in [-b,0],
\]
implying $\gamma_b=O(\psi(b))$.
It is easy to check directly from (\ref{Egamma}) 
that $\gamma_{b+1}-\gamma_b\geq \gamma_b-\gamma_{b-1}\geq 0$, for any $b\geq 3$
implying $b=O(\gamma_b)$.
Using convexity of $\psi$, we now have that
either $\psi(b)=O(b)$, or
$b=o(\psi(b))$ so that 
\[
\frac{\gamma_b}{\psi(b)}=1 + o(1).
\]
In both cases we have $\gamma_b=\Theta(\psi(b))$.
\end{proof}

Assuming (\ref{E me cond}) (or equivalently, (\ref{E sch cond})),
one can define
\begin{equation}
u_{\X}(t)\equiv u(t):=\int_t^{\infty}\!\frac{dq}{\psi(q)} \in \R_+, \ t>0,
\label{Eu}
\end{equation}
and its c\`adl\`ag inverse
\begin{equation}
\label{Ev} 
v_\X(t)\equiv v(t) :=\inf\left\{s>0: \int_s^{\infty} \frac1{\psi(q)}dq
<t\right\}, \ t>0.
\end{equation}
Call thus defined $v_\X$ the {\em candidate speed}.
In fact, due to the continuity and strict monotonicity of $u$, 
$v_\X$ is again specified by (\ref{Ev la}), with
$\psi_\X$ replacing $\psi_\La$.
If (\ref{Ealmost surely}) holds with $v=v_\X$,
we will sometimes refer to the candidate speed $v_\X$ as the {\em true speed of CDI}.

Note that (\ref{Ev}) makes sense regardless of (\ref{E me cond}), and 
yields $v_\Xi(t) = \infty$, for each $t>0$, if (and only if) (\ref{E me cond}) fails.
We will say that that the $\X$-coalescent ``has an infinite candidate speed'' in this setting.

Due to the fact $\psi_\X(q)\leq q^2/2$ (cf.~discussion following (\ref{D:psi xi})) we have
\begin{coro}
\label{C fastest}
If (\ref{E scaling}) holds, then $v_\X(t)\geq 2/t$, for $t> 0$.
\end{coro}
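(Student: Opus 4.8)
The plan is to exploit the pointwise upper bound $\psi_\X(q) \le q^2/2$, which holds for every probability measure $\X$ on $\Delta$; this was already recorded in the discussion following (\ref{D:psi xi}), as a consequence of applying (\ref{Ecalcfact}) termwise to each coordinate $x_i$ and integrating against $\X(d\bx)/\sumd$. Hypothesis (\ref{E scaling}) guarantees precisely that $\X$ is a probability measure, so this bound is in force.

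Since $\psi$ is strictly positive on $\R_+$, the bound inverts to $1/\psi(q) \ge 2/q^2$, and integrating from $s$ to $\infty$ gives
\[
\int_s^{\infty} \frac{dq}{\psi(q)} \ge \int_s^{\infty} \frac{2}{q^2}\,dq = \frac{2}{s}, \quad s > 0.
\]
I would then read the conclusion directly off the definition (\ref{Ev}) of the candidate speed. Indeed, if $s > 0$ belongs to the defining set $\{s : \int_s^{\infty} dq/\psi(q) < t\}$, then combining the last display with this defining inequality yields $2/s \le \int_s^{\infty} dq/\psi(q) < t$, whence $s > 2/t$. Thus the defining set is contained in $(2/t, \infty)$, and taking its infimum gives $v_\X(t) \ge 2/t$.

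There is no substantive obstacle here: the argument is a one-line monotonicity estimate followed by unwinding (\ref{Ev}). The only points requiring a moment's care are that the inequality reverses in the correct direction when passing from $\psi \le q^2/2$ to $1/\psi \ge 2/q^2$ (legitimate because $\psi$ is strictly positive on $\R_+$), and the degenerate case in which the defining set is empty, where $v_\X(t) = \infty \ge 2/t$ holds trivially under the convention $\inf \emptyset = \infty$.
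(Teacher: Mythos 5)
Your proof is correct and follows exactly the route the paper intends: the paper derives the corollary in one line from the bound $\psi_\X(q)\leq q^2/2$ (itself a consequence of (\ref{Ecalcfact}) and $\X(\Delta)=1$), and you have simply made explicit the integration step and the unwinding of definition (\ref{Ev}), including the harmless degenerate case where the defining set is empty.
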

One could try to rephrase the corollary by saying that among all the $\X$-coalescents 
(satisfying (\ref{E scaling})),
the Kingman coalescent is the fastest to come down from infinity at speed $t\mapsto 2/t$
(as is the case in the $\La$-coalescent setting, cf.~\cite{bbl1} Corollary 3).  
However, there are examples of $\X$-coalescents with infinite candidate speed that do come
down from infinity.
Moreover, there are coalescents that come down from infinity, and that have finite candidate speed $v_\X$, 
but the methods of this article break in the attempt of associating $N=N^\X$ and $v_\X$ at small times. 
The existence of a (deterministic) speed, and its relation to the 
function $t\mapsto 2/t$ in these situations, are open problems.
Vaguely speaking, such ``difficult cases'' 
correspond to measures $\X$ for which there exists a set $\Delta_f^*\approx \Delta_f$ such that
\[
\int_{\Delta_f^*} \frac{1}{\sumd}\,\X(d\bx) =\infty. 
\]
For rigorous statements see Section \ref{S:results}.

\subsection{Two operations on $\X$-coalescents}
\label{S:two op}
In this section we consider two variations of the PPP construction 
(\ref{Ecoupl eps n})--(\ref{Exi constr}),
each of which gives a probabilistic coupling of the original $\X$-coalescent with a simpler 
$\X$-coalescent.

Given a realization of the 
Poisson point process (\ref{DPPPpi}) and the 
coloring of (\ref{Dcoloring}),
define the {\em $\delta$-reduction} or ({\em $\delta$-color-reduction}) 
$\Pi_\delta^r$ ($r$ stands for ``reduction'') of $\Pi$ to be the 
partition valued process constructed as follows:
immediately after each coloring step (and before the merging)
run Bernoulli($\delta$) random variable for each block, independently over the blocks and
the rest of the randomness, and for each of the blocks having this new value $1$,
resample its color from the uniform $U[0,1]$ distribution, again independently from everything else.
Note that the ``reduction'' in the name refers to 
reducing the coloring (atom) weights, however this has
the opposite effect on the number of blocks.
Indeed, the above procedure makes some blocks that share (integer) color with others 
in the construction of $\Pi$ become uniquely colored in the construction of $\Pi_\delta^r$.
With a little extra care, one can obtain a coupling of $\Pi^{(n),\eps}$ (resp.~$\Pi$) 
and its reduction 
$\Pi_\delta^{(n),\eps,r}(t)$ (resp.~$\Pi_\delta^r$), so that
there are fewer blocks contained in $\Pi^{(n),\eps}$ (resp.~$\Pi$)
than in $\Pi_\delta^{(n),\eps,r}$ (resp.~$\Pi_\delta^r$) at all times.

Note that $\Pi_\delta^r$ is also a $\X$-coalescent, and that its driving measure is 
\[
\X_\delta(d\bx) := (1-\delta) \, \X ( d\sfrac{\bx}{1-\delta}) 1_{\{\bx \in \Delta^\delta\}}
\]
Then
$\X_\delta (\Delta)=\X_\delta (\Delta^\delta)= (1-\delta)^2 \X(\Delta)$. 
Let $\psi_\delta^r$ be defined as in (\ref{D:psi xi}), but corresponding to $\Pi_\delta^r$, 
\begin{eqnarray}
\psi_{\delta}^r(q) \equiv \psi_{\X_\delta^r}(q) \bck&=&\bck 
\int_{\Delta^\delta} \frac{\sum_{i=1}^\infty (e^{-qx_i}-1+qx_i) }{\sum_{i=1}^\infty x_i^2}\, (1-\delta)\, \X ( d\sfrac{\bx}{1-\delta}) \nonumber\\
\bck&=&\bck \int_{\Delta}\frac{\sum_{i=1}^\infty (e^{-qx_i(1-\delta)}-1+qx_i(1-\delta)) }
{\sum_{i=1}^\infty x_i^2}\, \X (d\bx). 
\label{Epsi red} 
\end{eqnarray} 
Since
$z \mapsto e^{-z} -1 + z$ is an increasing function  on $[0,\iy)$ 
we have $\psi_\delta^r(q) \leq \psi(q)$.
In fact, (\ref{Epsi red}) states that $\psi_{\delta}^r(q)=\psi ((1-\delta)q)$, $q\geq 0$, hence
\begin{equation*}
\label{E integr equiv red}
\int_a^\infty \frac{dq}{\psi(q)}<\iy \ \Leftrightarrow \ \int_a^\infty \frac{dq}{\psi_{\delta}^r(q)}<\iy.
\end{equation*}
It is perhaps not a priori clear why $\Pi_\delta^r$ is a simpler process.
We will soon see that 
because
its $\X$-measure is concentrated on $\Delta^\delta$, 
the criterion of \cite{schweinsberg_xi} for CDI is sharp, and
under an additional condition, its asymptotic speed can be found 
in a way analogous to \cite{bbl1}.

The second variation is as follows:
given realizations of (\ref{DPPPpi}) and (\ref{Dcoloring}) as before,
define the {\em color-joining} 
$\Pi^j$ ($j$ stands for ``joining'') of $\Pi$ to be the 
partition valued process where all the blocks with integral color are 
immediately merged together into one block.
As for the $\delta$-reduction,
one can obtain a coupling of $\Pi^{(n),\eps}$ (resp.~$\Pi$) 
and its color-joining
$\Pi^{(n),\eps,j}(t)$ (resp.~$\Pi^j$), so that
there are fewer blocks contained in 
$\Pi^{(n),\eps,j}(t)$
 (resp.~$\Pi^j$)
than in $\Pi^{(n),\eps}$ (resp.~$\Pi$) at all times.

The coalescent $\Pi^j$ should be a $\La$-coalescent,
with its corresponding $\psi_\La$ from (\ref{D:psi la}) given by
\begin{equation}
\label{Epsi join}
\psi^j(q) = 
\int_{\Delta}\frac{ (e^{-q\sum_{i=1}^\infty x_i}-1+q\sum_{i=1}^\infty x_i) }
{\sum_{i=1}^\infty x_i^2}\, \X (d\bx). 
\end{equation}
The slick point is that the right-hand side in (\ref{Epsi join}) may be infinite.
The existence of the integral in (\ref{Epsi join}) is equivalent to (cf.~the condition
(\ref{ERcond}) in the next section)
\begin{equation*}
\label{Epsi auxi}
\int_{\Delta} \frac{(\sum_{i=1}^\infty x_i)^2}{\sum_{i=1}^\infty x_i^2}\, \X (d\bx) < \infty.
\end{equation*}
Indeed, we have
\begin{equation}
\label{Earriveto}
\left(\sum_{i=1}^\infty x_i\right)^2\frac{q^2 \wedge q }{10}  
\leq e^{-q\sum_{i=1}^\infty x_i}-1+q\sum_{i=1}^\infty x_i \leq  \left(\sum_{i=1}^\infty x_i\right)^2\frac{q^2}{2}.
\end{equation}
The upper bound is just an application of (\ref{Ecalcfact}).
For the lower bound, assume that $q\geq 1$, the argument is simpler otherwise.
Note that
$e^{-z}-1 +z \geq z^2/2 -z^3/3 \geq z^2/10$ for $z< 5/4$,  and that
$e^{-z}-1 +z \geq z/10$ for $z\geq 5/4$.
Substituting $z=q \sum_{i=1}^\infty x_i$ 
we arrive at (\ref{Earriveto}).
As a consequence, the right-hand-side in (\ref{Epsi join}) is finite for one $q\in \R_+$ if and only 
if it is finite for all $q \in \R_+$.

\section{Main results}
\label{S:results}
\subsection{Regular case}
In this subsection assume that
(in addition to $\X(\{0\}\cup \Delta_f)=0$) the measure $\X$ satisfies the 
{\em regularity} condition 
\begin{equation}
\label{ERcond}
\tag{R}
\int_\Delta \frac{(\sum_{i=1}^\iy x_i)^2}{\sumd} \,\X(d\bx) < \iy.
\end{equation}
The complementary setting is discussed in Section \ref{S:improper joint}.

Denote by
$(N^\X(t), t\geq 0)$ the number of blocks process for the
$\X$-coalescent $(\Pi(t),t\geq 0)$, and recall definition (\ref{Ev}).
Regularity (\ref{ERcond}) implies
\[
\int_{\Delta \setminus \Delta^{1-a}} \frac{1}{\sumd} \,\X(d\bx) < \iy,
\]
for any $a \in (0,1)$.
In particular, in the PPP construction an atom $(t,\bx)$ satisfying
$\sum_i x_i > a$ appears at a strictly positive random (exponential) time.
Therefore, if $\X^a(d\bx)=\X(d\bx)\indica{\sum_i x_i \leq a}$,
the $\X$-coalescent and the $\X^a$-coalescent
 have the same small time behavior.

As already indicated in Section \ref{S:main preview}, the central result of this paper is
\begin{theorem} \label{Tsmalltime reg}
If both $\X(\{0\}\cup \Delta_f)=0$ and (\ref{ERcond}) hold, then
\begin{equation*}
\label{nblocks}
\lim_{t\to 0}\frac{N^\X(t)}{v_\X(t)} = 1, \mbox{ almost surely},
\end{equation*}
where $\iy/\iy \equiv 1$.
In particular, under these assumptions,
the candidate speed is finite if and only if
the $\X$-coalescent comes down from infinity, which happens if and only if it
 is the true speed of CDI.
\end{theorem}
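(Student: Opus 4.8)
The plan is to adapt the martingale technique of \cite{bbl1} to the $\X$-coalescent, carrying out the analysis on the finite restrictions $N^n(t):=\#\Pi^{(n)}(t)$ and their deterministic comparison functions $v^n$, the solutions of the Cauchy problem $v'=-\psi(v)$, $v(0)=n$ (as in Remark \ref{R cdi lam}), and only at the very end letting $n\to\iy$ and $t\to 0+$. The two operations of Section \ref{S:two op} serve as the organizing couplings. The color-joining produces a $\La$-coalescent $\Pi^j$ with $N^j\le N^\X$ pathwise, whose associated $\psi^j$ from (\ref{Epsi join}) is finite precisely because of (\ref{ERcond}), so that the results of \cite{bbl1} apply to it verbatim. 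The $\delta$-reduction produces, for each $\delta>0$, a $\X$-coalescent $\Pi^r_\delta$ with $N^\X\le N^r_\delta$ whose driving measure is supported on $\Delta^\delta$, hence bounded away from $\Delta_f$; on this ``regular'' region every estimate below holds with uniform constants, and $\psi^r_\delta(q)=\psi((1-\delta)q)$ has the same behaviour as $\psi$ near infinity.

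The heart of the argument is a single martingale identity. With $u=u_\X$ of (\ref{Eu}), so that $u'(b)=-1/\psi(b)$ and $v_\X$ is its inverse, Dynkin's formula applied to $u(N^n(\cdot))$ gives
\[
u(N^n(t))=u(n)+\int_0^t \mathcal{L}u(N^n(s))\,ds+M^n(t),
\]
with $M^n$ a martingale. I would establish two estimates. First, that the drift is asymptotically constant, $\mathcal{L}u(b)=1+o(1)$ as $b\to\iy$: since each downward jump $b\mapsto b-j$ contributes $u(b-j)-u(b)=\int_{b-j}^b dq/\psi(q)$, the leading term sums to $\gamma_b/\psi(b)$, which tends to $1$ by the bound $\gamma_b=\psi(b)+O(b)$ and the fact that, when (\ref{E me cond}) holds, $\psi(b)/b\to\iy$ (from Lemma \ref{Lincreasing} together with (\ref{E me cond})); the second-order contribution of the possibly large downward jumps is then controlled using the convexity of $u$ and Lemma \ref{Lincreasing}, together with (\ref{ERcond}). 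Second, that the predictable quadratic variation $\langle M^n\rangle$ is small enough on small time intervals, by bounding $\sum_j (u(b-j)-u(b))^2\,(\text{rate of }b\mapsto b-j)$. Inserting these into Doob's inequality yields, exactly as in \cite{bbl1} Theorem~1 and as previewed in Remark \ref{R cdi lam}, constants $n_0<\iy$, $C<\iy$ and $\alpha\in(0,1/2)$ for which
\[
\bigcap_{n\ge n_0}\Big\{\sup_{t\in[0,s]}\big|N^n(t)/v^n(t)-1\big|\le Cs^\alpha\Big\}
\]
holds with overwhelming probability, uniformly in small $s$.

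Letting $n\to\iy$ completes the almost sure statement. Pathwise $N^n\uparrow N^\X$, while the identity (\ref{Evns}), $\int_{v^n(s)}^n dq/\psi(q)=s$, shows $v^n(s)\uparrow v_\X(s)$ when (\ref{E me cond}) holds and $v^n(s)\uparrow\iy$ otherwise; the uniform bound therefore passes to the limit and, after a Borel--Cantelli argument along a sequence $s_k\to 0$, gives $\lim_{t\to0+}N^\X(t)/v_\X(t)=1$ almost surely, the convention $\iy/\iy\equiv 1$ covering the non-CDI case (where the dichotomy of Lemma~31 of \cite{schweinsberg_xi} forces $N^\X\equiv\iy$). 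For the ``in particular'' clause I would use the chain: coming down from infinity $\Leftrightarrow$ (\ref{E sch cond}) $\Leftrightarrow$ (\ref{E me cond}) $\Leftrightarrow$ $v_\X<\iy$. The first equivalence is sharp here because (\ref{ERcond}) yields $\int_{\Delta\setminus\Delta^\eps}(\sumd)^{-1}\,\X(d\bx)<\iy$ for every $\eps$, so that Propositions~32 and 33 of \cite{schweinsberg_xi} both apply; the middle one is the equivalence of (\ref{E sch cond}) and (\ref{E me cond}) proved just after Lemma \ref{Lincreasing}; and once $v_\X$ is finite it is the true speed by the limit just obtained, whereas if it is infinite the same limit forces $N^\X\equiv\iy$, i.e.\ no coming down.

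The step I expect to be genuinely delicate is the quadratic-variation (and second-order drift) control in the presence of simultaneous multiple collisions. Unlike in the $\La$-case, a single atom $(t,\bx)$ with $\sum_i x_i$ close to $1$ can remove a macroscopic fraction of the blocks in one jump, so $u(b-j)-u(b)$ is no longer of order $j/\psi(b)$ and the naive second-moment bounds diverge. This is exactly where the hypotheses are used: $\X(\Delta_f)=0$ excludes instantaneous collapse onto finitely many blocks, while (\ref{ERcond}) makes $\int_\Delta(\sum_i x_i)^2/\sumd\,\X(d\bx)$ finite, which bounds the aggregate effect of the large jumps. Concretely, I would control these terms through the pathwise sandwich $N^j\le N^n\le N^r_\delta$: the reduction confines the driving measure to $\Delta^\delta$, where the jump sizes obey uniform bounds and the whole \cite{bbl1} scheme runs with constants independent of $\delta$, while the color-joining bounds $N^\X$ below by a genuine $\La$-coalescent to which \cite{bbl1} applies directly; the concluding $\delta\to 0$ limit then restores the sharp constant $1$.
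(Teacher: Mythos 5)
Your overall strategy (a martingale argument on the prelimit processes $N^n$ with comparison functions $v^n$, condition (\ref{ERcond}) controlling drift and variance, and the chain CDI $\Leftrightarrow$ (\ref{E sch cond}) $\Leftrightarrow$ (\ref{E me cond}) $\Leftrightarrow$ $v_\X<\iy$ for the ``in particular'' clause) is the paper's strategy, and that last chain is indeed valid, since (\ref{ERcond}) makes Proposition 33 of \cite{schweinsberg_xi} applicable. But two of your central steps have genuine gaps. The first is the choice of functional: you apply Dynkin's formula to $u(N^n(\cdot))$ and assert that Doob's inequality then yields $\sup_{t\in[0,s]}|N^n(t)/v^n(t)-1|\le Cs^\alpha$. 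It does not. Your martingale controls the \emph{additive time-parametrization error} $u(N^n(t))-t$ to a fixed precision $s^\alpha$ over $[0,s]$, and this precision is far too coarse to detect ratio deviations: a deviation $N(t)=(1+\eps)v(t)$ displaces $u(N(t))$ from $t$ by only $\int_{v(t)}^{(1+\eps)v(t)}dq/\psi(q)\le \eps\, v(t)/\psi(v(t))\le 4\eps t$ (the last inequality because $\psi_\X(2q)\le 4\psi_\X(q)$ always, whence $v(t)/\psi(v(t))\le 4t$). So the discrepancies you must resolve are of order $\eps t\le\eps s$, while Doob on $[0,s]$ resolves only discrepancies of size $s^\alpha\gg s$ (recall $\alpha<1$); the bound is vacuous at \emph{every} $t\in[0,s]$, already for coalescents with $\psi(q)=\Theta(q^2)$, $v(t)=\Theta(1/t)$. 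This is exactly why the paper (and \cite{bbl1}) run the martingale on $\log N$ instead: by the integral equation (\ref{IE for v}), the compensator of $\log N$ matches $-\psi(v)/v$, so the martingale bounds $\log(N/v)$ — the quantity the theorem is about — directly; the moment estimates making this work under (\ref{ERcond}) are precisely Lemma \ref{Lbinlogcalc} and Proposition \ref{martingale estimates}, after the harmless truncation of $\X$ to $\Delta^{3/4}$ (legitimate since (\ref{ERcond}) forces atoms with $\sum_i x_i>3/4$ to arrive at a positive exponential time). Your large-jump worry is thus handled by a direct computation with $S(\bx)=\sumd+(\sum_i x_i)^2$, not by couplings.

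Second, even granting the per-$n$ estimate, your passage to the limit $n\to\iy$ is exactly the step the paper singles out as non-rigorous: Doob applied to each $M^n$ gives $P(G_n^c)=O(s^{1-2\alpha})$ for each $n$ \emph{separately}, whereas your argument needs $P\bigl(\bigcap_{n\ge n_0}G_n\bigr)$ to be large, and no union bound over infinitely many $n$ delivers that. The paper's Part I extracts from the per-$n$ bounds (\ref{Esuffices n}) only the equivalence ``CDI $\Leftrightarrow$ candidate speed finite'' (exploiting that $v^n$ is deterministic), and then proves the almost sure limit by a separate device in Part II: run the martingale for the standard coalescent on $[z,s]$, compare against the randomly shifted speeds $v_{X_z}(\cdot)=v(\cdot+X_z)$ with $X_z$ defined by $N(z)=v(X_z+z)$, obtain bounds uniform in $z$, and let $z\to0$ via Lemma \ref{L conv Xz}; your proposal has no substitute for this. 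Finally, the sandwich $N^j\le N^\X\le N^r_\delta$ cannot ``restore the sharp constant $1$''. On the lower side, $\psi^j\ge\psi_\X$ (by superadditivity of $z\mapsto e^{-z}-1+z$: $f(a+b)-f(a)-f(b)=(1-e^{-a})(1-e^{-b})\ge0$), and this gap is fixed — no parameter removes it; the color-joining is a strictly faster coalescent. On the upper side, $\psi^r_\delta(q)=\psi((1-\delta)q)$ gives $v^r_\delta(t)=v((1-\delta)t)/(1-\delta)$, and for fixed $\delta$ the factor $v((1-\delta)t)/v(t)=\exp\bigl(\int_{(1-\delta)t}^{t}\psi(v(r))/v(r)\,dr\bigr)$ stays bounded as $t\to0$ only if $\psi(v(t))/v(t)=O(1/t)$, i.e.\ only for coalescents coming down at most polynomially fast; in general it diverges, so even a correct speed theorem for every $\Pi^r_\delta$ would not transfer the constant $1$ to $\Pi$. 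In the paper these two operations play no role in the proof of Theorem \ref{Tsmalltime reg}; they serve only to illuminate the non-regular case of Section \ref{S:improper joint}.
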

The proof is postponed until Section \ref{S:martingale}.
The importance of condition (\ref{ERcond}) will become evident 
in view of Lemma \ref{Lbinlogcalc}, that implies Proposition \ref{martingale estimates} 
(see also~(\ref{eqdriftvar})--(\ref{eqdriftvar2})),
which is an
essential ingredient in the martingale analysis of 
Section \ref{S:martingale}.
It is interesting to note that (\ref{ERcond}) arises independently in the
context of the color-joining construction in Section \ref{S:two op}.
\begin{remark}
Once given Theorem \ref{Tsmalltime reg}, by straightforward copying of arguments from \cite{bbl1}, one could 
obtain the convergence of $N^\X/v_\X$ 
in the $L^p$ sense for $p\geq 1$, as well as the convergence of 
the total length of the genealogical tree in the regular setting.
\end{remark}
\begin{lemma}
\label{Ljoining reg}
Under (\ref{ERcond}) the color-joining $\Pi^j$ is a $\La$-coalescent corresponding to 
$\psi^j$ from (\ref{Epsi join}).
If $\Pi$ comes down from infinity, then $\Pi^j$ comes down from infinity at least as 
fast as $\Pi$, meaning that $\Pi^j$ has the speed of CDI
$v_\X^j(t)$ determined by
\[
\int_{v_\X^j(t)}^\infty \frac{dq}{\psi^j(q)} =t, 
\] 
where $v_\X^j(t)\leq v_\X(t)$, for any $t> 0$.
\end{lemma}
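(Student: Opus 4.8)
The plan is to establish the three assertions in sequence: first that $\Pi^j$ is a $\La$-coalescent with the exponent $\psi^j$ of (\ref{Epsi join}), then the pointwise bound $\psi^j \geq \psi$, and finally the CDI statement together with the ordering $v_\X^j \leq v_\X$, which will fall out immediately.

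To identify $\Pi^j$, I would read off its dynamics from the paintbox picture already set up in Section \ref{S:two op}. When an atom $(t,\bx)$ of the process (\ref{DPPPpi}) is processed, each of the $b$ current blocks independently acquires an integral color -- and so is slated for the single merger defining $\Pi^j$ -- with probability $p := \sum_{i=1}^\iy x_i$, the remaining blocks keeping distinct uniform colors. Thus every atom triggers exactly one merger group, whose membership is determined by i.i.d.\ $\mathrm{Bernoulli}(p)$ marks; this is precisely Pitman's paintbox mechanism, so $\Pi^j$ is a $\La$-coalescent. The rate at which a fixed $k$-tuple forms the merging group is $\int_\Delta p^k (1-p)^{b-k}\, \X(d\bx)/\sumd$, which is the rate $\la_{b,k}$ of (\ref{rate coal la}) for the measure obtained by pushing $\X(d\bx)/\sumd$ forward under $\bx \mapsto \sum_i x_i$; the corresponding $\psi_\La$ computed from (\ref{D:psi la}) is then exactly (\ref{Epsi join}), and (\ref{ERcond}) makes $\psi^j$ finite through (\ref{Earriveto}), so this is a bona fide $\La$-coalescent.

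The conceptual heart of the lemma -- and the step that must be handled with care to get the inequality pointing the right way -- is the comparison $\psi^j \geq \psi$. Writing $g(z) = e^{-z}-1+z$, note that $g$ is convex with $g(0)=0$, hence superadditive on $[0,\iy)$: from $g(a) \leq \tfrac{a}{a+b}g(a+b)$ and $g(b) \leq \tfrac{b}{a+b}g(a+b)$ one obtains $g(a)+g(b) \leq g(a+b)$, and iterating (then passing to the limit using monotonicity of $g$) yields $\sum_{i=1}^\iy g(qx_i) \leq g(q\sum_{i=1}^\iy x_i)$. Comparing the numerators of (\ref{D:psi xi}) and (\ref{Epsi join}) and integrating against $\X(d\bx)/\sumd$ gives $\psi(q) \leq \psi^j(q)$ for all $q \geq 0$. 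This is exactly the direction consistent with the coupling of Section \ref{S:two op}, in which $\Pi^j$ carries fewer blocks than $\Pi$ and hence should come down at least as fast; the superadditivity (not subadditivity) of $g$ is what makes joining colors \emph{increase} the exponent.

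The remaining claims are then immediate. Since $\Pi$ comes down from infinity, the equivalence of (\ref{E sch cond}) and (\ref{E me cond}) gives $\int_a^\infty dq/\psi(q) < \iy$, and $\psi^j \geq \psi$ yields $\int_a^\infty dq/\psi^j(q) \leq \int_a^\infty dq/\psi(q) < \iy$; by the criterion (\ref{E blg cond}) applied to the $\La$-coalescent $\Pi^j$ this shows $\Pi^j$ comes down from infinity, so $v_\X^j$ is well defined by the displayed identity. For the ordering, set $u(s) = \int_s^\infty dq/\psi(q)$ and $u^j(s) = \int_s^\infty dq/\psi^j(q)$; both are continuous, strictly decreasing to $0$, and $u^j \leq u$ pointwise. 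Evaluating at $s = v_\X(t)$ gives $u^j(v_\X(t)) \leq u(v_\X(t)) = t = u^j(v_\X^j(t))$, so strict monotonicity of $u^j$ forces $v_\X^j(t) \leq v_\X(t)$ for every $t > 0$.
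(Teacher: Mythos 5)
Your proposal is correct in outline and overlaps substantially with the paper, but it replaces the paper's key probabilistic step by an analytic one, and that substitution introduces one link that is not justified as written. The paper transfers coming down from infinity from $\Pi$ to $\Pi^j$ directly through the monotone coupling of Section \ref{S:two op} ($\Pi^j$ has at most as many blocks as $\Pi$ at all times), mentioning the inequality $(\psi^j(q)-\psi_\X(q))'\geq 0$ only as an alternative; you instead argue $\Pi$ CDI $\Rightarrow$ $\int_a^\infty dq/\psi(q)<\iy$ $\Rightarrow$ $\int_a^\infty dq/\psi^j(q)<\iy$ $\Rightarrow$ $\Pi^j$ CDI. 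The first implication does not follow from what you cite: the equivalence of (\ref{E sch cond}) and (\ref{E me cond}) says nothing about whether CDI implies either condition, and in general it does not --- Section \ref{S:improper joint} recalls Schweinsberg's example of a $\X$-coalescent that comes down from infinity while (\ref{E sch cond}) fails (equivalently, $v_\X\equiv\iy$). What rescues your chain is the standing hypothesis (\ref{ERcond}): as observed at the start of Section \ref{S:results}, regularity forces $\int_{\Delta\setminus\Delta^{1-a}}\frac{1}{\sumd}\,\X(d\bx)<\iy$ for $a\in(0,1)$, so Proposition 33 of \cite{schweinsberg_xi} applies and makes (\ref{E sch cond}) \emph{necessary} for CDI. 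You must invoke that proposition (or bypass the issue entirely with the coupling, as the paper does, which needs no integral criterion for this step); without it the step is a genuine gap, though one fixable in a line. You should also say explicitly that $v_\X^j$ is the true asymptotic speed of $\Pi^j$ because $\Pi^j$ is a $\La$-coalescent and \cite{bbl1} applies --- the displayed identity defines $v_\X^j$, but its status as speed of CDI is a theorem, not a definition.

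The remaining pieces match or improve on the paper. Your paintbox identification of $\Pi^j$, with merger rates $\int_\Delta p^k(1-p)^{b-k}\,\X(d\bx)/\sumd$ for $p=\sum_i x_i$, is the same computation the paper compresses into ``an immediate consequence of elementary properties of the Poisson point process,'' yielding $\La(dy)=y^2\int_{\Delta\cap\{\sum_i x_i=y\}}\X(d\bx)/\sumd$, and you correctly locate (\ref{ERcond}) as what makes this $\La$ a finite measure. Your superadditivity argument for $\psi\leq\psi^j$ (convexity of $g(z)=e^{-z}-1+z$ with $g(0)=0$) is a clean direct substitute for the paper's suggested derivative check. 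Finally, your monotonicity argument $u^j(v_\X(t))\leq u(v_\X(t))=t=u^j(v_\X^j(t))$ giving $v_\X^j(t)\leq v_\X(t)$ for \emph{all} $t>0$ is actually more explicit than the paper, which deduces this ordering of the two deterministic speeds from the coupling without detail; the analytic route is the tidier way to get the inequality on the whole time axis.
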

\begin{proof}
 Consider the process $\Pi^{(n)}=\lim_{\eps \to 0} \widetilde{\Pi}^{(n),\eps}$
from the PPP construction of $\Pi$.
It suffices to show that $\Pi^{(n),j}$ is a $\La$-coalescent (started from a configuration of $n$ blocks) corresponding to 
\[
\La(dy)=y^2 \int_{\Delta \cap \{\sum_i x_i=y\}} \frac{\X(d\bx)}{\sumd}
\]
This is an immediate consequence of elementary properties of the Poisson point process $\pi$ from
(\ref{DPPPpi}).

From the coupling of $\Pi$ and $\Pi^j$, 
where $\Pi$ has at least as many blocks as $\Pi^j$ at any positive time,
it is clear that if $\Pi$ comes down from infinity,
then also does $\Pi^j$. 
Alternatively, the reader can verify analytically that
\[
(\psi^j(q)-\psi_\X(q))' \geq 0, \ q \geq 0.
\]
Since $\Pi^j$ is a $\La$-coalescent, we know that $v_\X^j$ is its speed of CDI.
Then again due to the above coupling of $\Pi$ and $\Pi^j$ 
we conclude that $v_\X^j(t)\leq v_\X(t)$, for any $t> 0$.
\end{proof}
\begin{remark}
Note that all the $\X$-coalescents with $\X$ of the form (\ref{ELa from Xi}) are regular, and more generally, if $\X$ is supported on any ``finite''
subsimplex $\{\bx:x_k=0, \, \forall \, k\geq n\}$ of $\Delta$, then
the corresponding $\X$-coalescent is regular.
In particular, the $\X$-coalescents featuring in the selective sweep approximation of \cite{dursch,schdur}
are regular.
\end{remark}

\subsection{Non-regular case}
\label{S:improper joint}
Assume $\X(\{0\}\cup \Delta_f)=0$ as in the previous subsection.
The setting where
\begin{equation}
\label{EcondNR}
\tag{NR}
\int_\Delta \frac{(\sum_{i=1}^\iy x_i)^2}{\sumd} \,\X(d\bx) = \iy
\end{equation}
is more complicated, and the small time 
asymptotics for such $\X$-coalescents is only partially understood.

Due to observations made in the previous section,
(\ref{EcondNR}) is equivalent to the fact that the integral in
 (\ref{Epsi join}) diverges.
\begin{lemma}
\label{Ljoining}
Under (\ref{EcondNR}) the color-joining $\Pi^j$ is a trivial process containing
one block at all positive times. 
\end{lemma}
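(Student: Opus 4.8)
The plan is to track, for two fixed integers $i$ and $j$, the random time $T_{ij}$ at which the blocks of $\Pi^j$ containing them first coalesce, to show that $T_{ij}=0$ almost surely, and then upgrade this by a countable-intersection argument to the statement that $\Pi^j(t)$ is the one-block partition for all $t>0$. Working inside the PPP construction (\ref{DPPPpi})--(\ref{Dcoloring}), I first analyze the truncated color-joining $\Pi^{(\cdot),\eps,j}$ built from only those atoms with $\sumd>\eps$. At such an atom $(t,\bx)$ every block is colored independently according to $P_\bx$, so a given block receives an \emph{integral} color with probability $\sum_i x_i$. Since in the color-joining \emph{all} integrally-colored blocks merge into one block, the distinct blocks containing $i$ and $j$ are joined at precisely those atoms at which both are integrally colored --- an event of conditional probability $(\sum_i x_i)^2$, by independence of the coloring across the two blocks (note that they need not share a color).

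By the thinning property of the Poisson point process $\pi$, the atoms that cause $i$ and $j$ to merge therefore form, in the time variable, a Poisson process of rate
\[
r_\eps=\int_{\{\sumd>\eps\}}\frac{(\sum_i x_i)^2}{\sumd}\,\X(d\bx),
\]
so that the merge time $T_{ij}^\eps$ in $\Pi^{(\cdot),\eps,j}$ is $\mathrm{Exp}(r_\eps)$-distributed. Condition (\ref{EcondNR}) is exactly the statement that $r_\eps\uparrow\int_\Delta(\sum_i x_i)^2/\sumd\,\X(d\bx)=\iy$ as $\eps\to 0$, whence $P(T_{ij}^\eps>t)=e^{-r_\eps t}\to 0$ for every $t>0$. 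Because adjoining further atoms can only create additional mergers, the coupling across $\eps$ is monotone and the coalescence time $T_{ij}$ of $i$ and $j$ in $\Pi^j$ satisfies $T_{ij}\leq T_{ij}^\eps$ for every $\eps$; hence $P(T_{ij}>t)\leq e^{-r_\eps t}\to 0$, giving $T_{ij}=0$ almost surely.

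To finish, recall that a coalescent never splits blocks, so once $i$ and $j$ share a block they do so forever; thus, for each fixed pair, $\{i\sim_{\Pi^j(t)}j\}$ holds for all $t>0$ almost surely. For fixed $t$ the event $C_t:=\{\Pi^j(t)\text{ has one block}\}=\bigcap_{i,j}\{i\sim_{\Pi^j(t)}j\}$ is a countable intersection of probability-one events, so $P(C_t)=1$; and since $C_s\subseteq C_t$ for $s<t$ by monotonicity, $\bigcap_{t>0}C_t=\bigcap_{m}C_{1/m}$ is a decreasing intersection of probability-one events, hence has probability $1$. This is precisely the assertion that $\Pi^j(t)$ is the single-block partition at all positive times, almost surely.

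The main obstacle is conceptual rather than computational: one must identify correctly that the color-joining makes two blocks coalesce as soon as \emph{both} receive integral colors, which is what produces the pairwise rate $(\sum_i x_i)^2$ and thus the divergent $r_\eps$ under (\ref{EcondNR}); and one must justify transferring $T_{ij}=0$ from the truncated processes $\Pi^{(\cdot),\eps,j}$ to $\Pi^j$ via the monotone coupling. Everything else reduces to a standard Poisson thinning computation and a countable-intersection argument.
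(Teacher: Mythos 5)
Your proof is correct and is essentially the paper's argument: the paper's (very terse) proof likewise observes that the pairwise coalescence rate in the color-joining equals the integral in (\ref{EcondNR}), hence is infinite, forcing instantaneous coalescence of any two blocks in the prelimit processes. Your write-up simply makes explicit the steps the paper calls ``easy to verify'' --- the thinning computation giving the rate $r_\eps$, the monotone coupling over truncations, and the countable-intersection upgrade from pairs to the full one-block statement.
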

\begin{proof}
As for Lemma \ref{Ljoining reg}, consider the prelimit coalescents $\Pi^{(n)}$
and their color-joinings $\Pi^{(n),j}$.
It is easy to verify that (\ref{EcondNR}) implies instantaneous coalescence
of any two blocks of  
$\Pi^{(n),j}$, almost surely.
Indeed, the rate of coalescence for a pair of blocks is given by the integral in (\ref{EcondNR}).
\end{proof}
\begin{remark}
The last lemma holds even if
$\Pi$ does not come down from infinity.
\end{remark}
The following illuminating example was given in \cite{schweinsberg_xi}.
Suppose $\X$ has an atom of mass $1/2^n$ at 
\[
\bx^n:=(x_1^n,\ldots, x_{2^n-1}^n,0,\ldots),
\]
where $x_i^n = 1/2^n$, $i=1,\ldots,2^n-1$, and $n\in \N$.
Then $\psi_\X(q)=\Theta(q\log(q))$ so $v_\X$ is infinite,
but the corresponding $\Pi$ comes down from infinity.
Due to Theorem \ref{Tsmalltime reg} we see that (\ref{ERcond}) cannot hold in this case. 

It is useful to consider a generalization as follows: for a 
sequence $f:\N\to (0,1)$, let $\X$ have atom of mass $1/2^n$ at 
\[
\bx^n:=(x_1^n,\ldots, x_{\lfloor f(n)2^n\rfloor }^n,0,\ldots),
\]
where again $x_i^n = 1/2^n$, $i=1,\ldots,\lfloor f(n)2^n\rfloor$, and where we assume that
$\lfloor f(n)2^n\rfloor \in \{1,\ldots,2^{n-1}-1\}$, $n\geq 1$, so that
$\X(\Delta_f)=0$.
It turns out that again $\psi_\X(q)=\Theta(q\log(q))$ (in fact, this asymptotic behavior 
is uniform in the above choice of $f$),
while the integral in (\ref{ERcond}) (or (\ref{EcondNR})) is asymptotic to 
\[
\sum_n f(n).
\]
Due to Theorem \ref{Tsmalltime reg} we see that as soon as the above series converges, the 
corresponding $\X$-coalescent does not come down from infinity.
However, its color-joining will in many cases come down from infinity, 
for example if $f(n)= n^{-2}$, then 
$\psi^j(q) = \Theta(q^{3/2})$.

\begin{proposition}
Suppose that (\ref{EcondNR}) holds and that the 
the corresponding (standard) $\X$-coalescent $\Pi$ has an infinite candidate speed 
(or equivalently, that (\ref{E me cond}) fails).
Then for any $\delta \in (0,1)$, its $\delta$-reduction
$\Pi_\delta^r$ does not come down from infinity.
\end{proposition}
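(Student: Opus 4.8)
The plan is to verify that Schweinsberg's coming-down criterion is \emph{sharp} for $\Pi_\delta^r$ and then to show that the relevant condition (\ref{E sch cond}) fails for its driving measure $\X_\delta$. The essential observation is that the coupling in which $\Pi$ has fewer blocks than $\Pi_\delta^r$ at all times is, by itself, inconclusive here: under (\ref{EcondNR}) the process $\Pi$ may well come down from infinity despite having an infinite candidate speed (as the examples of this section illustrate), so domination cannot force $\Pi_\delta^r$ to stay infinite. The argument must therefore use the structure of the reduced measure directly.

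First I would record that $\X_\delta$, which by construction (cf.~(\ref{Epsi red})) is concentrated on $\Delta^\delta=\{\bx:\sum_i x_i\leq 1-\delta\}$, satisfies the standing hypotheses of this section. Indeed, every $\bx$ in the support of $\X_\delta$ has $\sum_i x_i\leq 1-\delta<1$, whence $\X_\delta(\Delta_f)=0$; and $\X_\delta(\{0\})=(1-\delta)\X(\{0\})=0$. Consequently both the dichotomy (Lemma 31 in \cite{schweinsberg_xi}) and the equivalence of (\ref{E sch cond}) and (\ref{E me cond}) are available for $\Pi_\delta^r$.

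Next I would invoke the necessity half of Schweinsberg's criterion. For any $\eps<\delta$ one has $\Delta^\delta\subset\Delta^\eps$, so $\Delta\setminus\Delta^\eps$ is disjoint from the support of $\X_\delta$ and
\[
\int_{\Delta\setminus\Delta^\eps}\frac{1}{\sumd}\,\X_\delta(d\bx)=0<\iy.
\]
Hence Proposition 33 of \cite{schweinsberg_xi} applies to $\Pi_\delta^r$, and $\Pi_\delta^r$ comes down from infinity if and only if (\ref{E sch cond}) holds for $\X_\delta$. It remains to see that (\ref{E sch cond}) fails. By (\ref{Epsi red}) we have $\psi_\delta^r(q)=\psi((1-\delta)q)$, so a change of variables yields
\[
\int_a^\infty\frac{dq}{\psi_\delta^r(q)}=\frac{1}{1-\delta}\int_{(1-\delta)a}^\infty\frac{du}{\psi(u)},
\]
showing that (\ref{E me cond}) holds for $\psi_\delta^r$ exactly when it holds for $\psi$. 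By hypothesis $\Pi$ has an infinite candidate speed, i.e.~(\ref{E me cond}) fails for $\psi$; therefore it fails for $\psi_\delta^r$ as well, and by the equivalence recalled above, (\ref{E sch cond}) fails for $\X_\delta$. The previous step then gives that $\Pi_\delta^r$ does not come down from infinity.

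The only genuinely non-routine point is the second step: recognizing that the easy comparison via the coupling is useless precisely in the regime of interest, and that sharpness must instead be recovered from the concentration of $\X_\delta$ on $\Delta^\delta$. The remaining verifications (the standing assumptions and the elementary change of variables) are bookkeeping, and condition (\ref{EcondNR}) itself is used only to place us in the non-regular setting where the statement is of interest.
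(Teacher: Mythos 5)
Your proof is correct and takes essentially the same route as the paper: the concentration of $\X_\delta$ on $\Delta^\delta$ makes Proposition 33 of \cite{schweinsberg_xi} applicable to $\Pi_\delta^r$, and the identity $\psi_\delta^r(q)=\psi((1-\delta)q)$ (together with the equivalence of (\ref{E sch cond}) and (\ref{E me cond})) transfers the failure of (\ref{E me cond}) to the reduced coalescent. The only cosmetic difference is that the paper first disposes of the case where $\Pi$ itself stays infinite via the monotone coupling and runs the direct argument for the remaining case, whereas you give a single unified argument covering both cases (and correctly observe that the coupling alone is inconclusive precisely when $\Pi$ comes down from infinity).
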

\begin{proof}
If $\Pi$ does not come down from infinity, then 
$\Pi_\delta^r$ does not either, due to the monotone coupling of $\Pi$ and
$\Pi_\delta^r$.

Even if $\Pi$ comes down from infinity, we have that 
\[
\int_{\Delta \setminus \Delta^\delta} \frac{1}{\sumd}\, \X_\delta^r(d\bx)=0 < \infty,
\]
and, as already observed, that
$\int_a^\infty 1/\psi_\delta^r(q)\,dq =\infty$ (for one and then all $a\in (0,\infty)$),
so due to Proposition 33 of \cite{schweinsberg_xi}, 
$\Pi_\delta^r$ does not come down from infinity.
\end{proof}
The last result and Lemma \ref{Ljoining} indicate the level of opacity
of the non-regular setting.
Indeed, a $\X$-coalescent $\Pi$ 
that comes down from infinity, but has infinite candidate speed and satisfies (\ref{EcondNR}),
can be formally ``sandwiched'' between its 
corresponding $\Pi^j$ and $\Pi_\delta^r$, where $\delta>0$ is very small, however
the lower bound $\Pi^j$ is trivial, and the upper bound $\Pi_\delta^r$ 
does not come down from infinity,
so one gains no pertinent information from the coupling.

To end this discussion, let us mention another class of frustrating examples.
Suppose that $\X_1$ is a probability measure on $\Delta$ satisfying 
both (\ref{ERcond}) and (\ref{E me cond}) 
and denote by $v_1$ the speed of CDI for the corresponding $\X_1$-coalescent.
Let $\X_2$ be a probability measure on $\Delta$ satisfying (\ref{EcondNR}).
Define
\[
\X:= \frac{1}{2}(\X_1+\X_2),
\] 
so that $\X$ satisfies both (\ref{E me cond}) and (\ref{EcondNR}).
Due to easy coupling, the $\X$-coalescent comes down from infinity, and moreover
\[
\limsup_{t\to 0} \frac{2N^\X(t)}{v_1(t)}=1.
\]
The martingale technique however breaks in the
non-regular setting, and we have no further information on the small time
asymptotics of $N^\X$.
It seems reasonable to guess that $N^\X$ is asymptotic to $v_1/2$ as $t\to 0$.

Remark \ref{R:last} discusses an approach that might 
be helpful in resolving the question of speed for $\X$-coalescents that come down from infinity
in the non-regular setting.

\section{The arguments}
\label{S:martingale}
The goal of this section is to prove Theorem \ref{Tsmalltime reg}
(that is, Theorem \ref{Tsmalltime reg early})
 by adapting the technique
from \cite{bbl1}.

As already noted, the function 
$\psi$ defined in (\ref{D:psi xi}) is
strictly increasing and convex.
Furthermore, it is easy to check that
$v'(s)=-\psi(v(s))$ where $v=v_\X$ is defined in (\ref{Ev}),
so that both $v$ and $|v'|$ are decreasing
functions.

Due to the observation preceding the statement of
Theorem \ref{Tsmalltime reg},
we can suppose without loss of generality that ${\rm supp}(\X)
\subset \Delta^{3/4}$ (recall notation (\ref{EDeleps})).
As in \cite{bbl1}, this will simplify certain technical estimates.

To shorten notation, write $N$ instead of $N^\X$.
Note that the function $v$ is the unique solution of the following
integral equation
\begin{equation}\label{IE for v}
\log(v(t)) - \log(v(z)) +  \int_z^t \frac{\psi(v(r))}{v(r)}\, dr = 0,
\ \forall 0<z<t,
\end{equation}
with the ``initial condition'' $v(0+)=\infty$.
If $\X$ can be identified with a probability measure $\La$ on $[0,1]$
as in (\ref{ELa from Xi}), then (\ref{IE for v}) is identical to the 
starting observation in the proof 
of Theorem \ref{Tsmalltime reg} for $\La$-coalescents
(cf.~proof of \cite{bbl1} Theorem 1).

Indeed, the rest of the argument is analogous to the 
one from \cite{bbl1} for $\La$-coalescents,
the general regular $\X$-coalescent setting 
being only slightly more complicated.
The few points of difference will be treated in detail, while the 
rest of the argument is only sketched.

\subsection{Preliminary calculations}
Assume that the given $\X$-coalescent has a finite number of blocks at some positive time $z$.   
Consider the process
\begin{equation*}
\label{EMart almost} 
M(t):=\log(N(t)) - \log(N(z)) + \int_z^t
\frac{\psi(N(r))}{N(r)} dr, \ t \geq z.
\end{equation*}
Let $n_0\ge 1$ be fixed. Define
\begin{equation}
\label{E taun0}
\tau_{n_0} :=\inf\{s>0: N(s) \leq n_0\}.
\end{equation}

It turns out that, under the regularity hypothesis (\ref{ERcond}),
 $M(t\wedge \tau_{n_0})$ is
``almost'' (up to a bounded drift correction)
a local martingale, with respect to
the natural filtration $(\FF_t,t\ge 0)$
generated by the underlying $\X$-coalescent
process.
\begin{proposition} \label{martingale estimates}
There exists some deterministic $n_0\in \N$ and $C<\infty$ such that
\begin{equation}
E[d\log(N(s))|\FF_s]= \left(-\frac{\psi(N(s))}{N(s)} + h(s)\right) ds,
\label{Eha}
\end{equation}
where $(h(s), s\ge z)$ is an $\FF$-adapted process  such that
$\sup_{s\in [z,z \wedge \tau_{n_0}]} |h(s)|\le C$, and
\[
E[[d\log(N(s))]^2|\FF_s] \indic{\{s \leq \tau_{n_0}\}} \leq C\, ds, \mbox{ almost surely.}
\]
Both estimates are valid uniformly over $z>0$.
\end{proposition}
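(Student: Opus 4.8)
The plan is to read off the infinitesimal drift and second moment of $\log N$ from the Poisson point process construction, match the drift to $-\psi(N(s))/N(s)$, and absorb every lower-order contribution into $h$. I fix $n_0$ large (to be pinned down) and condition on $N(s)=b\ge n_0$; by the reduction established before the theorem I may assume ${\rm supp}(\X)\subset\Delta^{3/4}$, so that $s(\bx):=\sum_i x_i\le 3/4$ for every atom. When an atom $\bx$ of $\pi$ fires, the coloring (\ref{Dcoloring}) makes $N$ drop by $D_\bx:=\sum_\ell(Y_\ell^{(b)}-\indica{Y_\ell^{(b)}>0})$; writing $g(u):=-\log(1-u)$, the jump of $\log N$ equals $-g(D_\bx/b)$, so
\[
E[d\log N(s)\mid\FF_s]=-\Big(\int_\Delta\frac{E[g(D_\bx/b)]}{\sumd}\,\X(d\bx)\Big)\,ds.
\]
Splitting $g(u)=u+(g(u)-u)$, the linear part integrates to $\gamma_b/b$, which by the computation already carried out in the excerpt ($\gamma_b=\psi(b)+O(b)$ with error in $[-b,0]$) equals $\psi(b)/b+\rho_b$ with $\rho_b\in[-1,0]$. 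This is the claimed main term and contributes at most $1$ in absolute value to $h$. Everything then reduces to bounding the nonnegative remainder $\int_\Delta E[g(D_\bx/b)-D_\bx/b]/\sumd\,\X(d\bx)$ by a constant, uniformly in $b\ge n_0$, and analogously for $E[g(D_\bx/b)^2]$.

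The quantitative input is the moment estimate $E[D_\bx^2]\le 2b^2\,(\sum_i x_i)^2$. It follows by expanding $D_\bx^2$: the diagonal terms obey $E[(Y_\ell^{(b)}-\indica{Y_\ell^{(b)}>0})^2]\le b^2x_\ell^2$, while the off-diagonal ones satisfy $E[Y_\ell^{(b)}Y_m^{(b)}]=b(b-1)x_\ell x_m$, so summing gives $E[D_\bx^2]\le b^2\sumd+b^2(\sum_i x_i)^2\le 2b^2(\sum_i x_i)^2$. The crucial feature is the \emph{absence} of a term linear in $b$, so that after division by $b^2$ this quantity is controlled precisely by the regularity integral (\ref{ERcond}). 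On the bulk event $\{D_\bx\le b/2\}$ one has $1-D_\bx/b\ge 1/2$, whence $g(D_\bx/b)-D_\bx/b\le (D_\bx/b)^2$, and its contribution is at most $\int_\Delta E[D_\bx^2]/(b^2\sumd)\,\X(d\bx)\le 2\int_\Delta(\sum_i x_i)^2/\sumd\,\X(d\bx)<\infty$ by (\ref{ERcond}). Via $g(u)\le 2u$ on $[0,1/2]$, the same estimate bounds the bulk part of $E[g(D_\bx/b)^2]$.

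The delicate part, and the main obstacle, is the large-jump event, where $g(D_\bx/b)=\log(b/(b-D_\bx))$ can be as large as $\log b$ since $b-D_\bx\ge1$. I would split it into $\{b/2<D_\bx\le 7b/8\}$ and $\{D_\bx>7b/8\}$. On the first, $b-D_\bx\ge b/8$ makes $g$ bounded by $\log 8$, so its contribution is at most $\log 8\int_\Delta P(D_\bx>b/2)/\sumd\,\X(d\bx)$; I bound $P(D_\bx>b/2)\le P(\mathrm{Bin}(b,s(\bx))>b/2)$ and split the $\bx$-integral at $s=1/8$, using $\int_{\{s>1/8\}}\X(d\bx)/\sumd<\infty$ (a consequence of (\ref{ERcond}) recorded in the excerpt) on one side, and the first-moment tail bound $P(\mathrm{Bin}(b,s)>b/2)\le 2^b s^{b/2}\le(\sum_i x_i)^2$ (valid for $s\le1/8$ and $b\ge n_0$) together with (\ref{ERcond}) on the other. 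On $\{D_\bx>7b/8\}$ I keep the factor $\log b$, but here $P(D_\bx>7b/8)\le P(\mathrm{Bin}(b,s)>7b/8)$ is a genuine large deviation for every $s\le 3/4$: for $s>1/8$ it is at most $e^{-cb}$ uniformly, so that $\log b\cdot e^{-cb}\int_{\{s>1/8\}}\X/\sumd\to0$, while for $s\le1/8$ the first-moment bound yields $2^b s^{7b/8}\le(\sum_i x_i)^2\,2^{-cb}$, and $\log b\cdot 2^{-cb}$ is harmless against (\ref{ERcond}). This is exactly where the reduction ${\rm supp}(\X)\subset\Delta^{3/4}$ is indispensable: without $s$ bounded away from $1$ the event $\{D_\bx\approx b\}$ would not be exponentially rare and the $\log b$ could not be absorbed.

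Finally, the quadratic-variation estimate $E[[d\log N(s)]^2\mid\FF_s]\indic{\{s\le\tau_{n_0}\}}\le C\,ds$ is obtained by the identical three-region decomposition applied to $E[g(D_\bx/b)^2]$: the bulk is handled by $g(u)^2\le 4u^2$ on $[0,1/2]$ and (\ref{ERcond}), while on the two tail events the bounds $g^2\le(\log 8)^2$ and $g^2\le(\log b)^2$ replace $g\le\log 8$ and $g\le\log b$, and $(\log b)^2$ is still beaten by the exponential smallness of the large-deviation probabilities. Choosing $n_0$ large enough to validate the binomial tail bounds and the sign bookkeeping then produces a finite $C$ and an $\FF$-adapted process $h$ with $\sup_s|h(s)|\le C$; since none of the above estimates involves $z$, both bounds hold uniformly in $z>0$.
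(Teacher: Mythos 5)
Your proposal is correct and takes essentially the same approach as the paper's proof: both read the drift and infinitesimal variance of $\log N$ off the PPP construction, match the linear part of the log-jump to $\psi(b)/b$ via $\gamma_b=\psi(b)+O(b)$, bound the quadratic remainder through the second-moment estimate $E[D_\bx^2]\le Cb^2\left(\sum_i x_i\right)^2$ (this is the content of Lemma \ref{Lbinlogcalc}, with $S(\bx)$ playing the role of your $\left(\sum_i x_i\right)^2$), control the event of a macroscopic jump by binomial large deviations played against the worst-case value $\log b$ of the log-jump, and then integrate against $\X(d\bx)/\sumd$ using (\ref{ERcond}). The only discrepancy is your reading of $\Delta^{3/4}$: by (\ref{EDeleps}) it means $\sum_i x_i\le 1/4$, not $\le 3/4$, which is why the paper gets away with a single split at $D_\bx=b/2$ (that event is already exponentially rare when $\sum_i x_i\le 1/4$), whereas your three-region split at $b/2$ and $7b/8$ handles the weaker hypothesis --- harmless, merely slightly more work than necessary.
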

Restricting the analysis to $n$ larger than $n_0$ is a
consequence
of the following estimate,
whose proof is given immediately after the proof of the proposition.
Recall $Y_\ell^{(n)}$ defined in (\ref{EYs}).
When taking probabilities or expectations with respect
to the joint law
of $(Y_\ell^{(n)},\,\ell\geq 1)$, 
we include subscript $\bx$ to indicate the dependence
of the law on $\bx$. 
Define
\[
S(\bx):= \sumd +\left(\sum_{i=1}^\infty x_i\right)^2.
\]
\begin{lemma}
\label{Lbinlogcalc} There exists $n_0\in \N$ and $C_0<\infty$ such
that for all $n\geq n_0$ and all $\bx \in \Delta^{3/4}$, we have
\[
\left| E_{\bx}\!\left(\!\! \log\!\!\left[n - \sum_{\ell=1}^\infty (Y_\ell^{(n)} -\indica{Y_\ell^{(n)} >0})\right]\!\! - \log{n}\!\!\right)\! + \!\frac{\sum_{\ell=1}^\infty  nx_\ell-1+(1-x_\ell)^n}{n}
\right| \leq C_0 S(\bx),
\]
and
\[
E_{\bx}\left( \log\!\!\left[n - \sum_{\ell=1}^\infty (Y_\ell^{(n)} -\indica{Y_\ell^{(n)} >0})\right]\!\! - \log{n}\right)^2 \leq C_0 S(\bx).
\]
\end{lemma}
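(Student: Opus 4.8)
The plan is to set $D:=\sum_{\ell=1}^\iy (Y_\ell^{(n)}-\indica{Y_\ell^{(n)}>0})$, noting that $Y_\ell^{(n)}-\indica{Y_\ell^{(n)}>0}=(Y_\ell^{(n)}-1)^+$, so that the post-merger block count equals $n-D\ge 1$ and the quantity inside both expectations is $\log(1-D/n)$. Writing $g(u):=\log(1-u)+u$, which satisfies $g\le 0$ on $[0,1)$, and observing that $E_\bx[D]=\sum_{\ell=1}^\iy (nx_\ell-1+(1-x_\ell)^n)$ is exactly the correction term in the first display, the first assertion becomes $|E_\bx[g(D/n)]|\le C_0 S(\bx)$, i.e.\ a bound on the purely nonlinear remainder. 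Thus both estimates reduce to (a) a second-moment bound for $D/n$, and (b) control of the logarithmic nonlinearity, which is harmless while $D/n$ stays away from $1$ but must be treated separately near the (near-)total-merger event.

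The first ingredient I would establish is $E_\bx[(D/n)^2]\le S(\bx)$. Here I use two elementary facts. First, the pointwise inequalities $(Y_\ell-1)^+\le \binom{Y_\ell}{2}$ and $\big((Y_\ell-1)^+\big)^2\le Y_\ell(Y_\ell-1)$, together with $E_\bx\big[\binom{Y_\ell^{(n)}}{2}\big]=\binom{n}{2}x_\ell^2$. Second, under the coloring (\ref{Dcoloring}) the occupation counts $(Y_\ell^{(n)})_\ell$ are multinomial, hence negatively associated, so the cross terms obey $E_\bx[(Y_\ell-1)^+(Y_m-1)^+]\le E_\bx[(Y_\ell-1)^+]\,E_\bx[(Y_m-1)^+]$. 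Expanding $E_\bx[D^2]$, the diagonal part is at most $2\binom{n}{2}\sumd\le n^2\sumd$, while the off-diagonal part is at most $(E_\bx[D])^2\le \big(n\sum_i x_i\big)^2$ via the crude bound $E_\bx[D]\le n\sum_i x_i$. Dividing by $n^2$ gives $\sumd+(\sum_i x_i)^2=S(\bx)$. As a corollary, Chebyshev yields $P_\bx(D>\tfrac78 n)\le (8/7)^2\,S(\bx)$.

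With this in hand I would split both expectations over the ``bulk'' event $\{D\le\tfrac78 n\}$ and the ``rare'' event $\{D>\tfrac78 n\}$. On the bulk event $\log(1-D/n)$ and $g(D/n)$ live in a region where the elementary estimates $|g(u)|\le Cu^2$ and $(\log(1-u))^2\le Cu^2$ hold for $u\in[0,7/8]$; combined with $E_\bx[(D/n)^2]\le S(\bx)$ this produces the required $C_0 S(\bx)$ contribution at once. On the rare event I would use $|\log(1-D/n)|\le\log n$ and exploit that, since $\bx\in\Delta^{3/4}$, the number $U$ of blocks receiving a unique colour in $(0,1)$ is $\mathrm{Bin}\big(n,1-\sum_i x_i\big)$ with $1-\sum_i x_i\ge 1/4$, and that $n-D\ge U$; a Chernoff lower-tail bound for $U$ then controls $P_\bx(D>\tfrac78 n)$.

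The genuinely delicate point is precisely this rare-event term: I must show $(\log n)^2\,P_\bx(D>\tfrac78 n)\le C_0 S(\bx)$ (and the analogous bound with $\log n$ for the first estimate) uniformly over $\bx\in\Delta^{3/4}$ for all $n\ge n_0$, the obstacle being that $S(\bx)$ can be arbitrarily small. I expect to handle this by distinguishing two regimes. When $\sum_i x_i$ is bounded below, the uniform exponential bound $P_\bx(U\le n/8)\le e^{-cn}$ suffices, because then $S(\bx)\ge(\sum_i x_i)^2$ is bounded below and $(\log n)^2 e^{-cn}\le C_0 S(\bx)$ for $n\ge n_0$. When $\sum_i x_i$ is small, the lower tail of $U$ (equivalently, the upper tail of the $\mathrm{Bin}(n,\sum_i x_i)$ count of coloured blocks) decays like $(\sum_i x_i)^{7n/8}$ up to an exponential-in-$n$ prefactor, which for $n\ge n_0$ beats both $(\log n)^2$ and the lower bound $S(\bx)\ge(\sum_i x_i)^2$. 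Matching the two regimes by a single choice of threshold, $n_0$, and $C_0$ is the only subtle calibration; everything else is the routine Taylor and second-moment bookkeeping sketched above.
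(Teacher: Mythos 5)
Your proposal is correct and takes essentially the same route as the paper's own proof: the paper likewise reduces both displays to the nonlinearity $\log(1-Z^{(n)})+Z^{(n)}$ with $Z^{(n)}=D/n$, splits at a fixed threshold (there $1/2$, with the large-deviation bound $2^n p^{n/2}(1-p)^{n/2}$ for $p=\sum_i x_i\le 1/4$), controls the bulk by the same second-moment estimate $E_\bx[(Z^{(n)})^2]\le C\,S(\bx)$ --- using Cauchy--Schwarz on the cross terms where you invoke negative association of the multinomial counts --- and calibrates the rare-event contribution against $p^2\le S(\bx)$ exactly as you do. One small remark: since you drop the factor $(1-p)^{n/2}$ from the tail bound, a threshold strictly above $1/2$ (your $7/8$) is genuinely needed for $(\log n)^2\, 2^n p^{7n/8}\le C_0\,p^2$ to hold uniformly over $p\in[0,1/4]$, and with that choice this single bound already works on the whole range, so the two-regime matching you flag as delicate is in fact unnecessary.
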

\begin{proof}
{\em $[$of Proposition \ref{martingale estimates}$]$}
Since (\ref{ERcond}) holds, it suffices to
show that for each $s>0$, we have on $\{N(s) \geq n_0\}$
\begin{equation}\label{eqdriftvar}
\left|\frac{E(d \log(N(s))|\FF_s)}{ds} + \frac{\psi(N(s))}{N(s)} \right|=
|h(s)|= O\left(\int_{\Delta^{3/4}}\frac{S(\bx)}{\sumd}\, \X(d\bx)\right),
\end{equation}
and
\begin{equation}\label{eqdriftvar2}
E([d \log(N(s))]^2|\FF_s) =O\left(\int_{\Delta^{3/4}} \frac{S(\bx)}{\sumd}\, \X(d\bx)\right)ds,
\end{equation}
where $O(\cdot)$ can be taken uniformly in $s$.
Note that the finite integrals above are in fact taken over $\Delta$,
since $\X$ is supported on $\Delta^{3/4}$.

Recall the PPP construction of
Section \ref{S:Xicoal further} and fix $n\geq n_0$. 
On the event $\{N(s)=n\}$, 
an atom carrying value $\bx\in\Delta$ arrives at rate $1/(\sumd)\,\X(d\bx)\,ds$, and given its arrival, $\log{N(s)}=\log{n}$ jumps to 
$\log(n - \sum_{\ell=1}^\infty (Y_\ell^{(n)} -\indica{Y_\ell^{(n)} >0}) )$. Therefore,
\[
E(d \log(N(s))|\FF_s)=\int_{\Delta} E_{\bx}\!\!\left[\log\frac{n-\sum_{\ell=1}^\infty (Y_\ell^{(n)} -\indica{Y_\ell^{(n)} >0})}{n} \right]\!\!\frac1{\sumd}\,\X(d\bx) \,ds.
\]
Due to Lemma \ref{Lbinlogcalc} and (\ref{E easy bound})
we can now derive (\ref{eqdriftvar}).

To bound the infinitesimal variance on the event $\{N(s) =n\}$,
use the second estimate in Lemma \ref{Lbinlogcalc}, together with the fact
\[
\frac{E([d \log(N(s))]^2|\FF_s)}{ds}\leq
\int_{\Delta} E_{\bx}\!\!\left[\log^2\!\!\left(\!\!\frac{n-\sum_{\ell=1}^\infty (Y_\ell^{(n)} -
\indica{Y_\ell^{(n)} >0})}{n}\right)\!\right]\!\!
\frac1{\sumd}\,\X(d\bx).
\]
Finally, note that both (\ref{eqdriftvar}) and (\ref{eqdriftvar2}) are uniform upper bounds over 
$s$.
\end{proof}

\begin{proof}
{\em $[$of Lemma \ref{Lbinlogcalc}$]$}
The argument is almost the same as that for \cite{bbl1} Lemma 19 in the $\La$-coalescent setting.
Since the regularity ``dichotomy'' is a consequence of some
more complicated expressions (arising in the calculations) in the current setting, 
most of the steps are included. 
Abbreviate 
\[
Z^{(n)}:=\frac{\sum_{\ell=1}^\infty (Y_\ell^{(n)} -
\indica{Y_\ell^{(n)} >0})}{n},
\]
and note that 
$Z^{(n)}$ is stochastically bounded by a Binomial($n,\sum_i x_i$) random
variable.
Let
\begin{equation*}
\label{ET} T\equiv T_n := \log\left(1-Z^{(n)}\right).
\end{equation*}
Split the computation according to the event
\[
A_n =\{Z^{(n)} \leq 1/2\},
\]
whose complement 
has probability bounded by
\begin{equation*}
\label{ELDPbound} \exp\left\{ -n
\left(\frac{1}{2}\log{\frac{1}{2p}}+
\frac{1}{2}\log{\frac{1}{2(1-p)}} \right)\right\} = 2^n p^{n/2}
(1-p)^{n/2},
\end{equation*}
uniformly in $p:=\sum_i x_i\leq 1/4$ and $n$, due to a large deviation bound (for sums of i.i.d.~Bernoulli random variables).
On $A_n^c$ we have $|T| \leq \log{n}$, and on $A_n$ we apply a
calculus fact,
$|\log(1-y) + y| \leq \frac{y^2}{2(1-y)}\leq y^2, \ y\in [0,1/2]$,
to obtain
\[
\left|E[T] + E\left[Z^{(n)} \indic{A_n}\right]
\right| \leq (\log{n}) P(A_n^c) +
E\left[(Z^{(n)})^2 \indic{A_n}\right].
\]
Since $Z^{(n)}\leq 1$, we conclude
\begin{equation*}
\label{Ehelping}
\left|E[T] + E[Z^{(n)}] \right| \leq
(\log{n}+1) P(A_n^c) + E[(Z^{(n)})^2].
\end{equation*}
Note that $|E[T]+E[Z^{(n)}]|$ is precisely the left-hand side of
the first estimate stated in the lemma. 
Due to the estimate (52) in the proof of \cite{bbl1} Lemma 19 we have
\begin{equation*}
\label{EPlognbound} 
(\log{n}) P(A_n^c)\leq (\log{n}) 2^n p^{n/2}
(1-p)^{n/2}\leq C p^2< CS(\bx),
\end{equation*}
for some $C<\infty$, all $p\in [0,1/4]$, and all $n$ large. 

Until this point the argument is identical to the one
for $\La$-coalescents.
The new step is verifying that 
\begin{equation}
\label{E new bound}
E[(Z^{(n)})^2]\leq S(\bx).
\end{equation}
It is easy to check (see for example \cite{bbl1} Corollary 18)
that
\begin{equation}
\label{Evariancebd}
E[(Y_\ell^{(n)}-\indica{Y_\ell^{(n)}>0})^2] \leq C n^2 (x_\ell)^2,
\end{equation}
for some constant $C<\infty$.
For two different indices $k,\ell$,
use Cauchy-Schwartz inequality together with the above bound to get 
\begin{equation}
\label{Ecovariancebd}
|E[(Y_k^{(n)}-\indica{Y_k^{(n)}>0})(Y_\ell^{(n)}-\indica{Y_\ell^{(n)}>0})]| \leq \sqrt{C^2 n^4 (x_k)^2 (x_\ell)^2}=
C n^2 x_k x_\ell.
\end{equation}
One obtains (\ref{E new bound}) from (\ref{Evariancebd})--(\ref{Ecovariancebd}) after rewriting
$E[(Z^{(n)})^2]$ as
\[
\frac{1}{n^2} \left(\sum_\ell E[(Y_\ell^{(n)}-\indica{Y_\ell^{(n)}>0})^2] + 
\sum_k\sum_{\ell \neq k} E[(Y_\ell^{(n)}-Y_\ell^{(n)}>0) (Y_k^{(n)}- \indica{Y_k^{(n)}>0})]\right).
\]

The second estimate is proved exactly as in \cite{bbl1}.
\end{proof}
\begin{remark}
The expectation of the product of
 $Y_k^{(n)}-\indica{Y_k^{(n)}>0}$ and $Y_\ell^{(n)}-\indica{Y_\ell^{(n)}>0}$ can be
computed explicitly, and one can verify that its absolute value has the
order of magnitude $n^2 x_k x_\ell$ as $x_k$ and (or) $x_\ell$ tend to $0$.
\end{remark}

\subsection{Proof of Theorem \ref{Tsmalltime reg}}
\label{S:smalltime}

{\em Part I.}
Suppose that a given regular $\X$-coalescent starts from $n$ blocks, where $n\in \N$ is large 
and finite. In other words, consider the prelimit process $\Pi^{(n)}$. 

Recall Remark \ref{R cdi lam}.
Define a family of deterministic functions $(v^n,\,n\in \N)$ 
as in (\ref{Evns}), where $\psi=\psi_\X$,
and note that $v^n$ satisfies $v^n(0)=n$ and
\begin{equation}\label{IE for v shift n}
\log(v^n(t)) - \log(n) +  \int_0^t \frac{\psi(v^n(r))}{v^n(r)}\, dr = 0,
\ \forall t>0.
\end{equation}
It is easy to see that the following is true.
\begin{lemma}
\label{L conv v}
We have $v^n(t) \leq v^{n+1}(t)$ and $\lim_{n\to \infty} v^n(t) = v_\X(t)$, for each 
$t>0$.
\end{lemma}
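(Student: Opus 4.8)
The plan is to prove Lemma \ref{L conv v} by analyzing the defining identity (\ref{Evns}), namely $\int_{v^n(t)}^n dq/\psi(q) = t$, treating $n$ as a parameter. I will establish the two claims separately: monotonicity $v^n(t) \le v^{n+1}(t)$, and convergence $\lim_n v^n(t) = v_\X(t)$.

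\textbf{Monotonicity.}
First I would fix $t>0$ and regard $n \mapsto v^n(t)$. Since $\psi$ is strictly increasing, continuous, and positive on $\R_+$ (as recorded after (\ref{D:psi xi})), the function $q \mapsto \int_q^\infty dr/\psi(r)$ is continuous and strictly decreasing wherever the upper portion of the integral converges; more robustly, for fixed lower endpoint the map $s \mapsto \int_s^{m} dq/\psi(q)$ is strictly decreasing in $s$ for any finite upper limit $m$. The cleanest route is to observe that both $v^n$ and $v^{n+1}$ solve the same autonomous Cauchy problem $v'=-\psi(v)$ (stated in Remark \ref{R cdi lam}), differing only in the initial value at $t=0$: $v^n(0)=n < n+1 = v^{n+1}(0)$. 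By the comparison principle for scalar autonomous ODEs with a locally Lipschitz right-hand side (here $-\psi$ is smooth), two solutions with ordered initial data stay ordered, so $v^n(t) \le v^{n+1}(t)$ for all $t>0$; equality cannot be created in finite time because the flow is a diffeomorphism. Alternatively, and perhaps more in the spirit of (\ref{Evns}), one can fix $t$ and suppose for contradiction that $v^{n+1}(t) < v^n(t)$; then since the two trajectories both flow backward according to the same $v'=-\psi(v)$, uniqueness of solutions forces them to coincide at $t=0$, contradicting $n \ne n+1$.

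\textbf{Convergence.}
Given monotonicity, $\ell(t):=\lim_n v^n(t)$ exists in $(0,\infty]$ for each $t$. I would identify $\ell(t)$ with $v_\X(t)$ by passing to the limit in (\ref{Evns}). Recall $v_\X(t)$ is characterized by $\int_{v_\X(t)}^\infty dq/\psi(q)=t$ when (\ref{E me cond}) holds, and $v_\X(t)=\infty$ otherwise. Consider first the case where (\ref{E me cond}) holds, so $\int_a^\infty dq/\psi(q)<\infty$. From $\int_{v^n(t)}^n dq/\psi(q)=t$, the monotone increase $v^n(t)\uparrow \ell(t)$ lets me send $n\to\infty$ inside the integral by monotone convergence on the domain $(v^n(t),n)$, yielding $\int_{\ell(t)}^\infty dq/\psi(q)=t$, which is exactly the defining relation for $v_\X(t)$; strict monotonicity of $s\mapsto \int_s^\infty dq/\psi(q)$ then forces $\ell(t)=v_\X(t)$, and in particular $\ell(t)<\infty$. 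In the case where (\ref{E me cond}) fails, $\int_{v^n(t)}^n dq/\psi(q)=t$ remains bounded while $\int_a^n dq/\psi(q)\to\infty$, which forces $v^n(t)\to\infty$; indeed if $\ell(t)$ were finite the integral $\int_{v^n(t)}^n dq/\psi(q)$ would diverge, contradicting its equalling the fixed value $t$. Hence $\ell(t)=\infty=v_\X(t)$, consistent with the convention $\infty/\infty\equiv 1$ used throughout.

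\textbf{Main obstacle.}
The argument is essentially routine once one commits to reading everything off the single identity (\ref{Evns}). The only genuinely delicate point is the limit interchange: one must confirm that the lower endpoints $v^n(t)$ are uniformly bounded below away from $0$ (so that the integrands $1/\psi(q)$ near the lower limit do not blow up in an uncontrolled way) and that the monotone convergence is applied to the correct nested family of integration domains. Since $\psi(q)\sim q^2/2$ as $q\to 0$ (noted after (\ref{D:psi xi})), we have $\int_0^a dq/\psi(q)=\infty$, so $v^n(t)$ staying bounded below is automatic: were $v^n(t)\to 0$ along some subsequence, the left side $\int_{v^n(t)}^n dq/\psi(q)$ would diverge, again contradicting its fixed value $t$. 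I would therefore record this lower bound explicitly before invoking monotone convergence, which removes the one place where carelessness could introduce an error.
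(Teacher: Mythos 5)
Your proof is correct. The paper offers no argument for this lemma (it is prefaced by ``It is easy to see that the following is true''), and what you write --- ordering of $v^n$ via the comparison principle for $v'=-\psi(v)$, then passing to the limit in the identity (\ref{Evns}) and treating separately the cases where (\ref{E me cond}) holds or fails --- is exactly the routine verification the paper implicitly relies on, built from the two characterizations of $v^n$ that the paper itself supplies. One point to tighten: the domains $(v^n(t),n)$ are \emph{not} nested (both endpoints increase with $n$), so monotone convergence does not literally apply as you state; writing $\ell(t)=\lim_n v^n(t)$, either split the integral as $\int_{v^n(t)}^{\ell(t)}+\int_{\ell(t)}^{n}$ (the first term vanishes because $1/\psi$ is bounded by $1/\psi(v^1(t))$ there and the interval shrinks, while the second is genuinely monotone in $n$), or apply dominated convergence with dominating function $\indica{q>v^1(t)}/\psi(q)$, which is integrable precisely under (\ref{E me cond}).
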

For each 
$n\geq n_0$ (where $n_0$ is the parameter from Proposition \ref{martingale estimates}) define
the process
\[
M_n(t):=
\log{\frac{N^n(t\wedge \tau_{n_0}^n)}{v^n(t \wedge \tau_{n_0}^n)}} 
+
\int_0^{t\wedge \tau_{n_0}^n}
\left[
\frac{\psi(N^n(r))}{N^n(r)} -
\frac{\psi(v^n(r))}{v^n(r)}
+h(r)\right] dr, \ t \ge 0,
\]
where $h=h^n$ is given in (\ref{Eha}), and $\tau_{n_0}^n:=\inf\{s>0: N^n(s) \leq n_0\}$ in 
analogy to (\ref{E taun0}).

Due to Proposition \ref{martingale estimates} and (\ref{IE for v shift n}),
we know that $M_n$ is a martingale (note that $M_n(0)= 0$), such that
\[
E[(M_n(s)- M_n(u))^2|\FF_s] \leq C(s-u),
\]
uniformly over $n\geq n_0$ and $u,s$ such that $s\geq u\geq 0$.
Fix any $\alpha \in (0,1/2)$. 
Doob's $L^2$-inequality therefore implies
\begin{equation}
\label{EboundDoob n}
P(\sup_{t\in[0,s]} |M_n(t)| > s^{\alpha})= O(s^{1-2\alpha}),
\end{equation}
where $O(\cdot)$ term is uniform over $n\geq n_0$.
Due to Proposition \ref{martingale estimates}, the term
\[ 
\int_0^{t\wedge \tau_{n_0}^n} h(r) dr
\]
is of smaller order $O(s)$, again uniformly in $n\geq n_0$. 
Hence we obtain from (\ref{EboundDoob n}) that
\[
P\left(\sup_{t\in[0,s]} \left|
\log{\frac{N^n(t\wedge \tau_{n_0}^n)}{v^n(t \wedge \tau_{n_0}^n)}} +
\int_0^{t\wedge \tau_{n_0}^n}
\left[\frac{\psi(N^n(r))}{N^n(r)} -\frac{\psi(v^n(r))}{v^n(r)}\right] dr \right|
> s^{\alpha}
\right) = O(s^{1-2\alpha}).
\]
Due to Lemma \ref{Lincreasing} and \cite{bbl1} Lemma 10, the last estimate implies in turn
\begin{equation}
\label{Esuffices n}
P\left(\sup_{t\in[0,s]} \left|
\log{\frac{N^n(t\wedge \tau_{n_0}^n)}{v^n(t \wedge \tau_{n_0}^n)}} \right|
> 2s^{\alpha}
\right) = O(s^{1-2\alpha}).
\end{equation}

Assume that the corresponding regular (standard) $\X$-coalescent $\Pi$ comes down from infinity.
Since $N^n(t)\nearrow N(t)$, for each $t >0$, $\tau_{n_0}^n \nearrow \tau_{n_0}$, 
almost surely, and since
\begin{equation}
\label{Etaunzero}
P(\tau_{n_0} >0)=1,
\end{equation}
we obtain due to Lemma \ref{L conv v} that the candidate speed $v(t):=\lim_n v^n(t)$ is finite
for each $t>0$.

Conversely, if this $\X$-coalescent does not come down from infinity,
then it must be $v(t):=\lim_n v^n(t)=\infty$.

{\em Part II.} 
Suppose that the $\X$-coalescent from part I comes down from infinity.
It is tempting to let $n\to \infty$ in (\ref{Esuffices n})
in order to obtain
\begin{equation}
\label{Esuffices x}
P\left(\sup_{t\in[0,s]} \left|
\log{\frac{N(t\wedge \tau_{n_0})}{v(t \wedge \tau_{n_0})}} \right|
> 2s^{\alpha}
\right) = O(s^{1-2\alpha}).
\end{equation}
However, this step would not be rigorous without additional information
on the family of events in (\ref{Esuffices n}), indexed by $n\geq n_0$.
An alternative approach is discussed next. 

From part I we know that the corresponding candidate speed
is finite.
Using this fact, a variation of the previous argument 
yields (\ref{Esuffices x}). 
Define a family of deterministic functions $(v_x,\,x\in \R)$ by
\[
v_x(t)= v(t+x),\ t\geq -x,
\]
and note that each $v_x$ satisfies an appropriate analogue of (\ref{IE for v}) on its entire domain,
more precisely, $ v_x(-x+)=\infty$ and
\begin{equation}\label{IE for v shift}
\log(v_x(t)) - \log(v_x(z)) +  \int_z^t \frac{\psi(v_x(r))}{v_x(r)}\, dr = 0,
\ \forall -x<z<t.
\end{equation}
Due to (\ref{Etaunzero}), one can assume that $z\geq \tau_{n_0}$.
For each 
$x>-z$ define
\[
M_{z,x}(t):=
\log{\frac{N(t\wedge \tau_{n_0})}{v_x(t \wedge \tau_{n_0})}} -
\log{\frac{N(z)}{v_x(z)}}+
\int_z^{t\wedge \tau_{n_0}}
\left[
\frac{\psi(N(r))}{N(r)} -
\frac{\psi(v_x(r))}{v_x(r)}
+h(r)\right] dr, \ t \ge z,
\]
where $h$ is given in (\ref{Eha}).

It will be convenient to consider for each fixed $z>0$ a process $M_{z,X}$, where
$X\in \FF_z$ such that $P(X>-z)=1$.
Note that such $M_{z,X}$ is adapted to the filtration
$(\FF_r,\,r\geq z)$.
More precisely, let $X_z$ be the random variable defined by
\begin{equation*}
\label{def Xz}
N(z)=v(X_z+z)= v_{X_z}(z).
\end{equation*}
It is easy to see that $X_z +z$ is decreasing to $0$ as $z$ decreases to $0$, and that
therefore the following is true.
\begin{lemma}
\label{L conv Xz}
We have $\lim_{z\to 0} X_z = 0$, hence $\lim_{z\to 0} v_{X_z}(t) = v(t)$ for all 
$t>0$, almost surely.
\end{lemma}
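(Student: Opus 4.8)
The plan is to deduce everything from the fact that the candidate speed $v=v_\X$ (defined in (\ref{Ev})) is a continuous, strictly decreasing map with the right boundary behaviour. Under the standing assumption that $\Pi$ comes down from infinity, Part~I guarantees that $v(t)$ is finite for every $t>0$; combined with $v'(s)=-\psi(v(s))<0$ and the ``initial condition'' $v(0+)=\infty$ recorded after (\ref{IE for v}), this shows that $v$ is a continuous, strictly decreasing function on $(0,\infty)$ with $\lim_{t\to 0+}v(t)=\infty$. Consequently its inverse $v^{-1}$ is well defined and continuous on the range of $v$, is strictly decreasing, and satisfies $v^{-1}(y)\to 0$ as $y\to\infty$.

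First I would rewrite the defining relation $N(z)=v(X_z+z)$ as $X_z+z=v^{-1}(N(z))$; this is legitimate because $N(z)\geq 1$ lies in the range $(0,\infty)$ of $v$, so the preimage is a well-defined positive number (in particular $X_z>-z$, as needed for $M_{z,X_z}$). Since the number-of-blocks process is non-increasing in time, $z\mapsto N(z)$ is non-decreasing as $z\downarrow 0$, and because we work with the standard version we have $\lim_{z\to 0+}N(z)=\infty$ almost surely. Applying the continuous, strictly decreasing map $v^{-1}$ then shows that $X_z+z=v^{-1}(N(z))$ decreases to $\lim_{y\to\infty}v^{-1}(y)=0$ as $z\downarrow 0$, almost surely; this is precisely the monotone convergence asserted (without proof) just before the lemma. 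Subtracting $z$, which itself tends to $0$, yields $\lim_{z\to 0}X_z=0$, almost surely.

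The second assertion is then immediate from continuity: for fixed $t>0$ we have $v_{X_z}(t)=v(t+X_z)$ by the definition $v_x(t)=v(t+x)$, and since $X_z\to 0$ while $v$ is continuous at the interior point $t>0$, it follows that $v_{X_z}(t)=v(t+X_z)\to v(t)$, almost surely.

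I do not expect a genuine obstacle here: the lemma is a soft consequence of the monotonicity and continuity of $v$ and of the standard-version property $\lim_{t\to 0+}N(t)=\infty$. The only points deserving a word of care are (i) checking that $N(z)$ always lies in the range of $v$ so that $v^{-1}(N(z))$ is meaningful (immediate, since $N(z)\geq 1$ and $v$ maps onto $(0,\infty)$), and (ii) recording that the finiteness of $v$ invoked throughout is exactly the coming-down-from-infinity hypothesis established in Part~I, so that we are legitimately in the branch where $v$ is a bona fide finite function rather than identically $\infty$.
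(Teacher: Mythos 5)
Your proof is correct and follows the same route the paper takes: the paper simply observes (just before the lemma) that $X_z+z=v^{-1}(N(z))$ decreases to $0$ as $z\downarrow 0$, which is exactly what you establish via the monotonicity of $N$, the standard-version property $\lim_{z\to 0+}N(z)=\infty$, and the continuity and strict monotonicity of $v$ on $(0,\infty)$. You merely supply the details (well-definedness of $v^{-1}(N(z))$ and the final continuity step) that the paper leaves as ``easy to see.''
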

Due to Proposition \ref{martingale estimates} and (\ref{IE for v shift}),
we know that $M_{z,X_z}$ is a martingale (note that $M_{z,X_z}(0)= 0$), such that
\[
E[(M_{z,X_z}(s)- M_{z,X_z}(u))^2|\FF_s] \leq C(s-u),
\]
uniformly over $u,s$ such that $s\geq u\geq z$.
As in part I, we obtain
\begin{equation*}
\label{EboundDoob}
P(\sup_{t\in[z,s]} |M_{z,X_z}(t)| > s^{\alpha})= O(s^{1-2\alpha}),
\end{equation*}
where $O(\cdot)$ term is uniform over $z>0$.
Again, due to Proposition \ref{martingale estimates}, the term
\[ 
\int_z^{t\wedge \tau_{n_0}} h(r) dr
\]
is of smaller order $O(s)$, uniformly in $z$. 
Hence 
\[
P\left(\sup_{t\in[z,s]} \left|
\log{\frac{N(t\wedge \tau_{n_0})}{v_{X_z}(t \wedge \tau_{n_0})}} +
\int_z^{t\wedge \tau_{n_0}}
\left[\frac{\psi(N(r))}{N(r)} -\frac{\psi(v_{X_z}(r))}{v_{X_z}(r)}\right] dr \right|
> s^{\alpha}
\right) = O(s^{1-2\alpha}).
\]
As before, due to Lemma \ref{Lincreasing} and \cite{bbl1} Lemma 10, the last estimate implies 
\[
P\left(\sup_{t\in[z,s]} \left|
\log{\frac{N(t\wedge \tau_{n_0})}{v_{X_z}(t \wedge \tau_{n_0})}} \right|
> 2s^{\alpha}
\right) = O(s^{1-2\alpha}), 
\]
and therefore for any $z'<z$
\[
P\left(\sup_{t\in[z,s]} \left|
\log{\frac{N(t\wedge \tau_{n_0})}{v_{X_{z'}}(t \wedge \tau_{n_0})}} \right|
> 2s^{\alpha}
\right) = O(s^{1-2\alpha}).
\]
Let $z'\to 0$ and use Lemma \ref{L conv Xz}, and then let $z\to 0$ to obtain
(\ref{Esuffices x}).
This together with (\ref{Etaunzero}) shows that in this setting
the candidate speed is 
the true speed of CDI.
\begin{remark}
\label{R:last}
As already mentioned, the above argument works only under the assumption (\ref{ERcond}).
However, regularity is only needed in 
linking $E(d\log{N(t)}|\FF_t)$ to $-\psi(N(t))/N(t)$, and in uniformly
bounding the infinitesimal variance of $\log{N(t)}$.
For irregular $\X$-coalescents that 
have an infinite candidate speed, but also
come down from infinity, 
a relation of similar kind
\[
E(d\log{N(t)}|\FF_t)=-\frac{\psi_1(N(t))}{N(t)} + h(t)
\] 
might be possible, where $h$ is still a uniformly bounded process, and where
$\psi_1$ is an increasing, convex function satisfying Lemma \ref{Lincreasing} and
\[
\int_a \frac{dq}{\psi_1(q)} <\infty, \ a>0.
\]
It is natural to guess that $v_1:\R_+\to \R_+$, determined by $\int_{v_1(t)}^\infty dq/\psi_1(q)=t$,
is then the speed of CDI.
\end{remark}

{\bf Acknowledgement.}
The author wishes to thank the staff at the Institut Mittag-Leffler for their hospitality.

\end{document}